\numberwithin{equation}{section}
\tikzset{surface/.style={draw=black!60!,thick}}
\newcommand{\coneback}[4][]{
  \draw[canvas is xy plane at z=#2, #1] (\tdplotmainphi-#4:#3) 
  arc(\tdplotmainphi-#4:\tdplotmainphi+180+#4:#3) -- (O) --cycle;
  }
\newcommand{\conefront}[4][]{
  \draw[canvas is xy plane at z=#2, #1] (\tdplotmainphi-#4:#3) arc
  (\tdplotmainphi-#4:\tdplotmainphi-180+#4:#3) -- (O) --cycle;
  }
\theoremstyle{plain}
\newtheorem{thm}{Theorem}[section]
\newtheorem{lem}[thm]{Lemma}
\newtheorem{prop}[thm]{Proposition}
 \theoremstyle{definition}
\newtheorem{defn}[thm]{Definition}
\newtheorem{rem}[thm]{Remark}
\newtheorem{notn}[thm]{Notation}
\newtheorem{setup}[thm]{Setup}
\newcommand{\Hom}{{\rm{Hom}}}
\newcommand{\ovl}{\overline}
\newcommand{\bm}[1]{\mathbf{#1}}
\newcommand{\mb}[1]{\mathbb{#1}}
\newcommand{\mc}[1]{\mathcal{#1}}
\newcommand{\mr}[1]{\mathrm{#1}}
\newcommand{\vphi}{\varphi}
\newcommand{\Char}{\operatorname{char}}
\newcommand{\Spec}{\operatorname{Spec}}
\newcommand{\GW}{\operatorname{GW}}
\newcommand{\Sym}{\operatorname{Sym}}
\newcommand{\Tr}{\operatorname{Tr}}
\newcommand{\ind}{\operatorname{ind}}
\newcommand{\sign}{\operatorname{sign}}
\newcommand{\disc}{\operatorname{disc}}
\newcommand{\rank}{\operatorname{rank}}
\newcommand{\Jac}{\operatorname{Jac}}
\newcommand{\Hilb}{\operatorname{Hilb}}
\newcommand{\codim}{\operatorname{codim}}
\newcommand{\pcoor}[1]{%
  \begingroup\lccode`~=`: \lowercase{\endgroup
  \edef~}{\mathbin{\mathchar\the\mathcode`:}\nobreak}%
  [
  \begingroup
  \mathcode`:=\string"8000
  #1%
  \endgroup 
  ]
}
\begin{document}
\title{Conics meeting eight lines over perfect fields}

\author[Darwin]{Cameron Darwin}
\address{Department of Mathematics, Duke University} 
\email{cameron.darwin@duke.edu}

\author[Galimova]{Aygul Galimova}
\address{Department of Mathematics, Duke University} 
\email{aygul.galimova@duke.edu}

\author[Gu]{Miao (Pam) Gu}
\address{Department of Mathematics, Duke University}  
\email{pmgu@math.duke.edu}
\urladdr{sites.duke.edu/pmgu/}

\author[McKean]{Stephen McKean}
\address{Department of Mathematics, Harvard University} 
\email{smckean@math.harvard.edu}
\urladdr{shmckean.github.io}
\subjclass[2020]{Primary: 14N15, Secondary: 14F52.}
\begin{abstract}
    Over the complex numbers, there are 92 plane conics meeting 8 general lines in projective 3-space. Using the Euler number and local degree from motivic homotopy theory, we give an enriched version of this result over any perfect field. This provides a weighted count of the number of plane conics meeting 8 general lines, where the weight of each conic is determined the geometry of its intersections with the 8 given lines. As a corollary, real conics meeting 8 general lines come in two families of equal size.
\end{abstract}
\maketitle
\section{Introduction}
The space of plane conics in $\mb{P}^3$ is 8 dimensional. If we require that a conic intersects a given line, we impose one condition and lose one degree of freedom on the space of plane conics. As a result, the space of plane conics meeting 8 general lines is a 0 dimensional Noetherian scheme and is therefore a finite set. A classical theorem of enumerative geometry gives the cardinality of this set.

\begin{thm}\label{thm:main-classical}
Let $k$ be an algebraically closed field with $\Char{k}\neq 2$. Given 8 lines in general position in $\mb{P}^3_k$, there are 92 plane conics meeting all 8 lines. Moreover, each of these plane conics is smooth. (See e.g.~\cite[Ex. 3.2.22]{Ful98} or~\cite[Theorem 9.26]{eisenbud_harris_2016}.)
\end{thm}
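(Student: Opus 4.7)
The plan is to realize the space of plane conics in $\mb{P}^3_k$ as an $8$-dimensional parameter space $\mc{M}$, interpret the incidence condition ``meets a fixed line'' as a divisor class on $\mc{M}$, and compute the $8$-fold self-intersection number of this divisor.

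First, I would construct $\mc{M}$ as the moduli of pairs $(H,C)$, where $H\subset \mb{P}^3$ is a plane and $C\subset H$ is a plane conic. Planes in $\mb{P}^3$ are parameterized by the dual $(\mb{P}^3)^\vee$, and conics in a fixed plane $H$ form a $\mb{P}^5 = \mb{P}(\Sym^2 H^\vee)$. Globally, $\mc{M} = \mb{P}(\Sym^2 \mc{S}^\vee)$ is the projective bundle over $(\mb{P}^3)^\vee$ associated to the rank-$3$ tautological subbundle $\mc{S}$, and is smooth of dimension $3+5=8$. Let $\pi \colon \mc{M} \to (\mb{P}^3)^\vee$ be the projection.

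Second, for a fixed line $\ell\subset \mb{P}^3$, I would identify the divisor $Z_\ell \subset \mc{M}$ of pairs $(H,C)$ with $C\cap \ell \neq \emptyset$. For a generic plane $H$, the intersection $\ell \cap H$ is a single point $p(H)$, and the condition $p(H)\in C$ is linear in the coefficients of $C$, cutting out a hyperplane in the $\mb{P}^5$ fiber. A careful accounting over the sublocus $\{H\supset \ell\}$ then expresses $[Z_\ell]\in A^1(\mc{M})$ as an explicit combination of the pullback $\alpha = \pi^* h$ of the hyperplane class on $(\mb{P}^3)^\vee$ and the tautological class $\beta = c_1(\mc{O}_{\mc{M}}(1))$. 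Using the projective bundle formula to describe $A^*(\mc{M})$, I would then expand $[Z_\ell]^8$ and evaluate its degree.

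The main obstacle is the ``boundary'' behavior where $H\supset \ell$: on this codimension-$2$ subvariety of the base, every conic in $H$ meets $\ell$, so the fiber of $Z_\ell$ jumps dimension and one must take care that the naive divisor class is corrected appropriately. Resolving this cleanly is traditionally done either by passing to the space of complete conics (whose Chow ring handles degenerations of both the conic and its supporting plane) or by a direct Bertini/dimension-count argument showing that for $8$ general lines the incidence divisors $Z_{\ell_1},\dots, Z_{\ell_8}$ meet transversely and away from this boundary. A final Bertini-type argument then confirms that each of the $92$ intersection points corresponds to a smooth conic. The full computation is carried out in the classical references \cite[Ex.\ 3.2.22]{Ful98} and \cite[Theorem 9.26]{eisenbud_harris_2016}.
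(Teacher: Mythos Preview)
The paper does not prove this theorem at all: it is stated as a classical result and deferred entirely to the cited references \cite[Ex.~3.2.22]{Ful98} and \cite[Theorem~9.26]{eisenbud_harris_2016}. Your proposal is a faithful sketch of the argument found in those references, so in that sense it is correct and aligned with what the paper invokes.

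It is worth noting that the paper does set up exactly the parameter space you describe---writing $X=\mb{P}\Sym^2(\mc{S}^\vee)$ over $\mb{G}(2,3)\cong(\mb{P}^3)^\vee$ and interpreting ``meets $L$'' as the vanishing of a section $\sigma_L$ of $\mc{O}_X(1)$, i.e.\ your divisor $Z_\ell$ is the zero locus of your $\beta$-class section---but it uses this only for the \emph{enriched} count. The classical number $92$ enters the paper solely as input: in Lemma~\ref{lem:euler class} the authors know $e(\mc{O}_X(1)^{\oplus 8})=n\cdot\mb{H}$ for some $n$ by a hyperbolicity argument, and then fix $n=46$ by citing $\int_X c_1(\mc{O}_X(1))^8=92$ from Fulton. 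So your outline (projective-bundle Chow ring, expand $\beta^8$, Bertini for transversality and smoothness) is precisely the computation the paper is outsourcing; there is nothing to compare against in the paper itself.
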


As with many results in classical enumerative geometry, this theorem is only true over an algebraically closed field. The \textit{$\mb{A}^1$-enumerative geometry} program seeks to generalize such theorems using various tools from motivic homotopy theory.\footnote{See \cite{KW21,Lev20,RL20,BKW20,SW21,LV19,McK21,Pau20,CDH20} for some examples or \cite{Bra20,PW20} for a survey.} We give an $\mb{A}^1$-enumerative generalization of Theorem~\ref{thm:main-classical}.

We start with some notation. Let $k$ be a perfect field with $\Char{k}\neq 2$. Given a conic $q\subset\mb{P}^3_k$, let $k(q)$ be its field of definition. Let $\GW(k)$ be the Grothendieck--Witt group of isomorphism classes of symmetric, non-degenerate bilinear forms over $k$. Given $a\in k^\times$, let $\langle a\rangle\in\GW(k)$ be the bilinear form given by $(x,y)\mapsto axy$. Finally, let $\Tr_{k(q)/k}:\GW(k(q))\to\GW(k)$ be induced by the field trace.

\begin{thm}\label{thm:main-enriched}
Let $L_1,\ldots,L_8$ be lines in general position in $\mb{P}^3_k$. Let $Q$ be the set of all plane conics in $\mb{P}^3_k$ meeting $L_1,\ldots,L_8$. Then
\begin{align}\label{eq:main}
    46\langle 1\rangle+46\langle -1\rangle=\sum_{q\in Q}\Tr_{k(q)/k}\langle a_q\rangle,
\end{align}
where $a_q\in k(q)^\times$ is a constant determined by the conic $q$, the intersections $L_i\cap q$, and the tangent lines $T_{L_i\cap q}q$ for $1\leq i\leq 8$.
\end{thm}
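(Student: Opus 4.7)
The plan is to follow the standard blueprint of $\mb{A}^1$-enumerative geometry: realize $Q$ as the zero locus of a section of a rank-$8$ vector bundle over an $8$-dimensional parameter space, check relative orientability, and equate the global Euler number with the sum of local indices. First I would take the parameter space to be the $\mb{P}^5$-bundle $\pi: X \to \check{\mb{P}}^3$ whose fiber over a plane $H\in\check{\mb{P}}^3$ is the space of conics in $H$, i.e.\ $X=\mb{P}(\Sym^2 \mc{S}^*)$ for $\mc{S}$ the tautological rank-$3$ subbundle on $\check{\mb{P}}^3$. For each line $L_i\subset\mb{P}^3$, the condition ``the conic meets $L_i$'' is cut out by evaluating the universal conic equation at the point $L_i\cap H$, which produces a section $s_i$ of an explicit line bundle $E_i$ on $X$. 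Setting $E=\bigoplus_{i=1}^{8}E_i$ and $s=(s_1,\dots,s_8)$, we have $Z(s)=Q$, so it remains to compute the Euler number $n(E,s)\in\GW(k)$ and the local contributions.

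Next I would verify that $E$ is relatively orientable, i.e.\ that $\det E\otimes\det T_X^{-1}$ is a square in $\op{Pic}(X)$. Using the Euler sequence for the projective bundle $X\to\check{\mb{P}}^3$ to compute $\det T_X$ and tracking the twists built into each $E_i$, this reduces to a degree check on a free abelian group of rank $2$; the assumption $\op{char}k\neq 2$ enters to guarantee the square class behaves well. With orientability in hand, I would compute $n(E,s)\in\GW(k)$ globally. One route is a Chow--Witt computation using the projective bundle formula, reducing $e(E)$ to an expression in $\widetilde{\op{CH}}^*(\check{\mb{P}}^3)$ and pushing forward; another, perhaps easier, is to specialize $L_1,\dots,L_8$ to a rational configuration over a subfield where $Q$ can be enumerated explicitly (e.g.\ four pairs of skew lines on a smooth quadric), compute the local indices by hand, and invoke deformation invariance of the Euler number to conclude that the total always equals $46\langle 1\rangle+46\langle -1\rangle$. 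The hyperbolic shape of the answer suggests that the $92$ complex conics pair up geometrically (for instance under complex conjugation or an involution on $X$), which is what a well-chosen degeneration should make manifest.

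For the local indices I would use the Scheja--Storch/Kass--Wickelgren formula. Around each isolated zero $q\in Q$, pick local coordinates on $X$ (three moving the plane through $q$ in $\check{\mb{P}}^3$, five moving the conic within its plane) and write out the $8\times 8$ Jacobian of $s$. Each row corresponds to $s_i$, i.e.\ to evaluation at the moving intersection $L_i\cap H$, so differentiating along a base direction produces a contribution proportional to a tangent vector of $L_i$ paired with a normal direction to $q$ at $L_i\cap q$, while differentiating in a fiber direction records how a perturbation of the conic equation changes its value at $L_i\cap q$. The determinant of this matrix, viewed modulo squares in $k(q)^\times$, is the geometric class $a_q$ of the statement; its dependence on $L_i\cap q$ and $T_{L_i\cap q}q$ is exactly what the row/column structure of the Jacobian records. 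Summing via the trace then gives the right-hand side of~\eqref{eq:main}.

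The main obstacle is the global Euler-number computation: producing the precise value $46\langle 1\rangle+46\langle -1\rangle$ rather than merely a class of rank $92$. A direct Chow--Witt calculation on a $\mb{P}^5$-bundle over $\check{\mb{P}}^3$ is technically heavy, so I expect the cleanest argument to go through a carefully chosen degeneration plus invariance of $n(E,s)$ under deformation of $s$; the challenge is to exhibit a configuration of $8$ lines defined over the prime field whose $92$ conics split transparently into $46$ pairs contributing $\langle a\rangle+\langle -a\rangle=\langle 1\rangle+\langle -1\rangle$ to the sum. A secondary obstacle is bookkeeping in the local index formula: since $s_i$ mixes base and fiber coordinates on $X$, the trivializations of $E_i$ and of $T_X$ must be chosen compatibly with the given orientation so that the resulting $a_q$ is a genuinely intrinsic invariant of the geometric data $(q, L_i\cap q, T_{L_i\cap q}q)$.
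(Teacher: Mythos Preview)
Your outline matches the paper's argument almost exactly: the parameter space $X=\mb{P}\Sym^2(\mc{S}^\vee)\to\mb{G}(2,3)\cong\check{\mb{P}}^3$, the identification of each incidence condition ``$L_i$ meets the conic'' with a section of the tautological line bundle $\mc{O}_X(1)$ (so your $E_i$ are all $\mc{O}_X(1)$ and $E=\mc{O}_X(1)^{\oplus 8}$), the relative-orientability check via $\omega_X\cong\mc{O}_X(-6)$, and the computation of local indices as trace-of-Jacobian classes are all carried out in the paper just as you describe.

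The one substantive difference is precisely the step you flag as the main obstacle, namely the global Euler-number computation. The paper uses neither a direct Chow--Witt calculation nor a degeneration to a special configuration of lines. Instead it invokes a structural result, \cite[Proposition~19]{SW21}: applied with $\mc{E}=\mc{O}_X(1)^{\oplus 7}$ and $\mc{E}'=\mc{O}_X(1)$, it shows that the Euler class of $\mc{E}\oplus\mc{E}'=\mc{O}_X(1)^{\oplus 8}$ is automatically an integer multiple of the hyperbolic form $\mb{H}$. (Morally, the splitting off of an odd-rank summand lets one negate one coordinate, flipping the sign of the Euler class and forcing it to be hyperbolic.) Once $e(\mc{O}_X(1)^{\oplus 8})=n\cdot\mb{H}$ is known, the classical count $\int_X c_1(\mc{O}_X(1))^8=92$ gives $2n=\rank(n\cdot\mb{H})=92$, hence $n=46$. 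This turns your ``main obstacle'' into a two-line lemma and makes the search for a configuration with $46$ visible pairs unnecessary. Your proposed degeneration strategy would also work in principle, but it is both harder to execute and, as you note, requires exhibiting such a configuration over the prime field, which is nontrivial.
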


Theorem~\ref{thm:main-enriched} gives some insight into real conics meeting 8 general lines. Hauenstein and Sottile showed that over $\mb{R}$, there can be $2n$ real conics meeting 8 lines for $0\leq n\leq 45$ \cite[Table 6]{HS12}. Griffin and Hauenstein completed this result by constructing 8 lines over $\mb{R}$ such that all 92 conics meeting these lines are real \cite[Theorem 1]{GH15}. Our work illuminates a small amount of extra structure on this set of $2n$ conics. Taking the signature of Equation~\ref{eq:main} yields Theorem~\ref{thm:real}, which states that the $2n$ real conics meeting 8 general lines fall into two families of $n$ conics.

\subsection{General approach and outline}\label{sec:approach}
Our goal is to prove an equality in $\GW(k)$, the Grothendieck--Witt group of isomorphism classes of non-degenerate symmetric bilinear forms over $k$. One side of this equation will be given by an Euler number \cite{KW21,BW20}, which is valued in $\GW(k)$ in the context of motivic homotopy theory. The other side of this equation will consist of a sum of local contributions, which are analogs of the local Brouwer degree \cite{Mor12,KW19,KW21}. The final step is to find a formula for these local contributions in terms of the geometry at hand -- in our case, the geometry of lines meeting a plane conic.

In order to make use of Euler numbers and local degrees, we need to phrase our enumerative problem in terms of a vector bundle over a scheme parameterizing conics in $\mb{P}^3$. This has been done classically~\cite[Chapter 9.7]{eisenbud_harris_2016}. We will recall the relevant details here.

\subsubsection*{Terminology}
Typically, a line, plane, or conic over a field $k$ refer to these objects as varieties over $k$. We will only work with lines defined over the base field $k$ in this article. However, we will work with planes and conics over finite extensions of $k$. These planes and conics arise as closed points in their parameter spaces. For example, a point $H\in\mb{G}(2,3)$ in the Grassmannian of 2-planes in $\mb{P}^3_k$ represents a 2-plane in $\mb{P}^3_{k'}$, where $k'$ is the residue field of $H$. Similarly, a point $(H,q)$ in the Hilbert scheme of conics represents a degree 2 curve defined over $k'$ contained in a 2-plane in $\mb{P}^3_{k'}$, where $k'$ is the residue field of $(H,q)$. We may thus refer to the residue field of $(H,q)$ as the \textit{field of definition} of the conic represented by $(H,q)$.

\subsubsection*{Space of conics}
Let $k$ be a field. The Hilbert scheme $\Hilb_{2t+1}(\mb{P}^3_k)$ is the moduli scheme parameterizing conics in $\mb{P}^3_k$. However, it will be more convenient for us to work with a different presentation of this moduli space. Any subscheme of $\mb{P}^3_k$ with Hilbert polynomial $2t+1$ is the complete intersection of a plane and a quadric surface. If the conic is reduced, this plane is uniquely determined by three non-colinear points on the conic. If the conic is a double line defined over $k$, then we may use a $k$-linear change of coordinates such that the support of the double line is $\{x_0=x_1=0\}$. By standard considerations on the Hilbert polynomial, we deduce that the double line must have defining ideal $(x_0^2,x_0x_1,x_1^2,x_0f(x_2,x_3)+x_1g(x_2,x_3))$, where $f,g\in k[x_2,x_3]$ are in fact constants. It follows that the plane $\mb{V}(x_0f+x_1g)$ is again uniquely determined by the conic. Putting these cases together, we get a morphism $\Hilb_{2t+1}(\mb{P}^3_k)\to\mb{G}(2,3)$, where $\mb{G}(2,3)$ is the Grassmannian of 2-planes in $\mb{P}^3_k$. The fiber of this map is the space of conics in the plane, namely $\Hilb_{2t+1}(\mb{P}^2_k)$.

Let $\mc{S}$ be the universal subbundle of $\mb{G}(2,3)$. Consider the symmetric bundle $\Sym^2(\mc{S}^\vee)$ of planar quadratic forms, which is a rank 6 vector bundle over $\mb{G}(2,3)\cong\mb{P}^3_k$. The points of the projective bundle $\mb{P}\Sym^2(\mc{S}^\vee)\to\mb{G}(2,3)$ correspond to projective classes of homogeneous quadratic polynomials on planes in $\mb{P}^3_k$. By the universal property of Hilbert schemes, we thus get a morphism $\phi:\mb{P}\Sym^2(\mc{S}^\vee)\to\Hilb_{2t+1}(\mb{P}^3_k)$. Moreover, since each conic in $\mb{P}^3_k$ uniquely determines its plane, the map $\phi$ is a bijection of points. Finally, since Hilbert schemes respect base change and $\Hilb_{2t+1}(\mb{P}^3_{\ovl{k}})$ is smooth and irreducible, it follows that $\Hilb_{2t+1}(\mb{P}^3_k)$ is also smooth and irreducible~\cite[\href{https://stacks.math.columbia.edu/tag/05B5}{Lemma 05B5} and \href{https://stacks.math.columbia.edu/tag/038I}{Lemma 038I}]{stacks}. We now conclude by Zariski's main theorem that $\phi$ is an isomorphism. We will use $X:=\mb{P}\Sym^2(\mc{S}^\vee)\to\mb{G}(2,3)$ as our presentation of the moduli space of conics in $\mb{P}^3_k$. The rational points of $X$ are of the form $(H,q)$, where $H\in\mb{G}(2,3)$ is a plane and $q\in\mb{P}\Sym^2(H^\vee)$ is the projective class of a homogeneous quadratic polynomial on $H$ (whose vanishing defines a conic on $H$).

\subsubsection*{Conics meeting a line}
Next, we need a vector bundle on $X$ with a global section that vanishes precisely on conics that meet a given line. We will define such a (line) bundle and section on an open subset $U\subseteq X$ such that $\codim(X\backslash U)\geq 2$. Since $X$ is smooth, $X$ satisfies Serre's $S_2$-criterion for extending coherent sheaves. In particular, we can extend coherent sheaves over $U$ to coherent sheaves over $X$. In order to promote such an extension of coherent sheaves to an extension of vector bundles, we would need to verify that the coherent sheaf on $X$ extending our vector bundle on $U$ is locally free of finite rank. This is automatic when the bundle on $U$ is a line bundle, which is the case at hand.

Moreover, since $X$ is a projective bundle over a smooth projective $k$-scheme, $X$ is itself a smooth projective $k$-scheme. One can thus show that $\mr{depth}_{\mc{I}}(\mc{L})\geq 2$ for any line bundle $\mc{L}\to X$, where $\mc{I}$ is the ideal of the closed complement $X\backslash U$. It follows that
\[H^0_{X\backslash U}(X,\mc{L})=H^1_{X\backslash U}(X,\mc{L})=0,\]
so the long exact sequence
\[\cdots\to H^i_{X\backslash U}(X,\mc{L})\to H^i(X,\mc{L})\to H^i(U,\mc{L}|_U)\to H^{i+1}_{X\backslash U}(X,\mc{L})\to\cdots\]
yields an isomorphism $H^0(X,\mc{L})\cong H^0(U,\mc{L}|_U)$. In particular, we can extend global sections of line bundles over codimension 2 subsets in $X$.

Let $L\in\mb{G}(1,3)$. The locus of planes $H\in\mb{G}(2,3)$ such that $L\subset H$ forms a pencil $P\subset\mb{G}(2,3)$ of dimension 1. Thus the locus $U:=\pi^{-1}(\mb{G}(2,3)\backslash P)\subseteq X$ is an open subset (and hence subscheme) whose complement is of codimension at least 2. At the level of points, $U$ consists of pairs $(H,q)$, where $H$ is a plane not containing $L$ and $q$ is the projective class of a homogeneous quadratic polynomial on $H$. 

Consider the morphism $\alpha:U\to L$ given by $L\cap\pi|_U(-)$. At the level of points, we have $\alpha(H,q)=L\cap H$. Vanishing at a point imposes a linear condition on the space of plane conics, so we have a subbundle $Q_L\subseteq\mb{P}\Sym^2(\mc{S}^\vee)$ whose fiber over $H\in\pi(U)$ is the space $Q(H)_{L\cap H}\cong\mb{P}^4_k$ of projective classes of homogeneous quadratic polynomials on $H$ that vanish at $L\cap H$. By construction, we have a short exact sequence
\[0\to Q_L\to\mc{O}_U(-1)\to\alpha^*\mc{O}_L(2)\to 0.\]
It follows that the bundle morphism $\mc{O}_U(-1)\to\alpha^*\mc{O}_L(2)$ is given by the evaluation map $\mr{ev}_L(H,q)\mapsto q\text{ mod }Q(H)_{L\cap H}$. Equivalently, the zero section $z\in H^0(U,\mc{O}_U(-1))$ and the evaluation map $\mr{ev}_L:\mc{O}_U(-1)\to\alpha^*\mc{O}_L(2)$ yield a global section $\mr{ev}_L\circ z$ of 
\[\Hom(\mc{O}_U(-1),\alpha^*\mc{O}_L(2))\cong\mc{O}_U(1)\otimes\alpha^*\mc{O}_L(2)\] 
that vanishes precisely on conics that intersect $L$. On points, this section is given by the formula $\mr{ev}_L\circ z(H,q)=q(L\cap H)$. By our assumption that $L$ meets every plane in $\pi(U)$ transversely, we have an isomorphism of bundles 
\[\alpha^*\mc{O}_L(2)\cong(\pi|_U)^*\mc{O}_{\mb{G}(2,3)}(2).\]
Extending this line bundle and the section $\mr{ev}_L\circ z$ across the complement of $U$, we obtain the line bundle $E:=\mc{O}_X(1)\otimes\pi^*\mc{O}_{\mb{G}(2,3)}(2)\to X$ and global section $\sigma_L$. Given 8 general lines $L_1,\ldots,L_8\in\mb{G}(1,3)$, the section $\sigma:=\bigoplus_{i=1}^8\sigma_{L_i}:X\to E^{\oplus 8}$ vanishes precisely on conics that meet each $L_1,\ldots,L_8$.

\subsubsection*{$\mb{A}^1$-enumerative count}
The classical count of conics meeting 8 general lines in $\mb{P}^3_\mb{C}$ is given by $\int_X c_1(E)^8=92$~\cite[Section 9.7.3]{eisenbud_harris_2016}. The enriched count of conics meeting 8 general lines over a field $k$ is given by the Euler number $e(E^{\oplus 8})\in\GW(k)$, which we can compute using a result of Srinivasan and Wickelgren~\cite[Proposition 19]{SW21}. This Euler number is equal to a sum of local information over the set of conics $(H,q)$ meeting $L_1,\ldots,L_8$ \cite[Theorem 3]{KW21}:
\begin{equation}\label{eq:sum of degrees}
    e(E^{\oplus 8})=\sum_{(H,q)\in\sigma^{-1}(0)}\ind_{(H,q)}(\sigma).
\end{equation}
After computing the Euler number in Lemma~\ref{lem:euler class}, we will address the local indices $\ind_{(H,q)}(\sigma)$. Given a conic $(H,q)$ in an affine neighborhood $U\subset X$, we will give invertible Nisnevich coordinates $\vphi_U^{-1}:U\to\mb{A}^8_k$ and local trivializations (post-composed with projection) $\psi_U:E^{\oplus 8}|_U\to U\times\mb{A}^8_k\to\mb{A}^8_k$ in Section~\ref{sec:coords}. The local index $\ind_{(H,q)}(\sigma)$ is equal to the local $\mb{A}^1$-degree $\deg^{\mb{A}^1}_{\vphi_U^{-1}(H,q)}(\Phi_U)$ of the composite
\[\Phi_U:=\psi_U\circ\sigma\circ\vphi_U:\mb{A}^8_k\to\mb{A}^8_k\]
at a zero $(H,q)\in U$. In Section~\ref{sec:geometric interpretation}, we give an alternate formula for $\deg^{\mb{A}^1}_{\vphi_U^{-1}(H,q)}(\Phi_U)$ in terms of the intersection points $L_i\cap H$ and tangent lines of the conic $\mb{V}(q)\cap H$.
By replacing $\ind_{(H,q)}(\sigma)$ in Equation~\ref{eq:sum of degrees} with our alternate formula in terms of geometric information, we recover Theorem~\ref{thm:main-enriched}. We conclude with a discussion of real conics in Section~\ref{sec:real}.

\subsection{Acknowledgements}
We thank Kirsten Wickelgren for suggesting this problem during her topics course on intersection theory, as well as for her support and feedback. We also thank Yupeng Li for his collaboration on an early stage of this project. We thank Ben Williams for helpful comments. The last named author thanks Thomas Brazelton and Sabrina Pauli for helpful discussions about conics in $\mb{P}^3$. Finally, we thank the anonymous referee, whose thorough reading and suggestions greatly improved our exposition and corrected several of our mistakes. 

The third named author received support from Jayce Getz's NSF grant (DMS-1901883). The last named author received support from Kirsten Wickelgren's NSF CAREER grant (DMS-1552730) and an NSF MSPRF (DMS-2202825).

\section{Background in $\mb{A}^1$-enumerative geometry}\label{sec:background}
In classical enumerative geometry, one is interested in (possibly weighted) integer-valued counts of geometric objects. For example, if $Q$ is the set of plane conics meeting 8 general lines in $\mb{P}^3_\mb{C}$, then
\begin{align}\label{eq:classical}
92=\sum_{q\in Q}1.
\end{align}
In $\mb{A}^1$-enumerative geometry, we replace such integer-valued counts with bilinear form-valued counts. We will show that if $Q$ is the set of plane conics meeting 8 general lines in $\mb{P}^3_k$ over a perfect field $k$, then
\begin{align}\label{eq:a1-enumerative}
46\langle 1\rangle+46\langle -1\rangle=\sum_{q\in Q}B_q.
\end{align}
Here, the bilinear form $\langle a\rangle:k\times k\to k$ is given by $(x,y)\mapsto axy$. The weight $B_q$ is a bilinear form determined by geometric information associated to the plane conic $q$ (see Section~\ref{sec:geometric interpretation}). By taking field invariants, we can recover enumerative equations over specific fields. For example, taking the rank of Equation~\ref{eq:a1-enumerative} recovers Equation~\ref{eq:classical}, while taking the signature of Equation~\ref{eq:a1-enumerative} yields a new theorem (Theorem~\ref{thm:real}) giving a weighted count of conics meeting 8 general lines over $\mb{R}$.

\subsection{Grothendieck--Witt groups}
The significance of bilinear forms in $\mb{A}^1$-enumerative geometry stems from Morel's calculation of the Brouwer degree in $\mb{A}^1$-homotopy theory (also known as motivic homotopy theory):
\begin{thm}\cite[Corollary 1.24]{Mor12}
For $n\geq 2$, there is a group (and in fact, ring) isomorphism 
\[
\deg^{\mb{A}^1}:[\mb{P}^n_k/\mb{P}^{n-1}_k,\mb{P}^n_k/\mb{P}^{n-1}_k]_{\mb{A}^1}\xrightarrow{\cong}\GW(k),
\]
where $[-,-]_{\mb{A}^1}$ denotes $\mb{A}^1$-homotopy classes of maps, $\mb{P}^n_k/\mb{P}^{n-1}_k$ is a motivic space playing the role of the sphere, and $\GW(k)$ is the Grothendieck--Witt group of isomorphism classes of symmetric non-degenerate bilinear forms over $k$.
\end{thm}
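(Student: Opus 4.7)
My plan is to identify both sides with the $0$th Milnor--Witt $K$-theory group $K^{MW}_0(k)$, and then to apply Morel's standard isomorphism $K^{MW}_0(k) \cong \GW(k)$. The first step is to identify $\mb{P}^n_k/\mb{P}^{n-1}_k$ with the motivic sphere $S^{2n,n} \simeq (S^1)^{\wedge n} \wedge \mb{G}_m^{\wedge n}$, which follows by induction on $n$ from the cofiber sequence $\mb{P}^{n-1} \hookrightarrow \mb{P}^n \to \mb{P}^n/\mb{P}^{n-1}$ and the base equivalence $\mb{P}^1 \simeq S^{2,1}$. The hypothesis $n \geq 2$ places us in the motivic Freudenthal range, so the canonical suspension map from the unstable hom-set $[S^{2n,n}, S^{2n,n}]_{\mb{A}^1}$ to the sections over $\Spec(k)$ of the $0$th homotopy sheaf $\underline{\pi}_0(\mb{S}_k)$ of the motivic sphere spectrum is a bijection.

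Next I would invoke Morel's computation of the $0$-th motivic stable stem: the Nisnevich homotopy sheaf $\underline{\pi}_0(\mb{S}_k)$ is canonically isomorphic to the unramified sheaf $\underline{K}^{MW}_0$ of Milnor--Witt $K$-theory. Evaluating at $\Spec(k)$ and then using the standard presentation of $K^{MW}_0(k)$ by generators $\langle u \rangle$ for $u \in k^\times$, subject to the usual Witt-style relations, recovers the Grothendieck--Witt group of symmetric non-degenerate bilinear forms. For the ring structure, one checks that composition (equivalently smash product) of self-maps of the motivic sphere corresponds, under the above identifications, to the tensor product of bilinear forms, which is the multiplication on $\GW(k)$.

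The main obstacle is Morel's identification $\underline{\pi}_0(\mb{S}_k) \cong \underline{K}^{MW}_0$. A direct proof demands his full apparatus of strongly $\mb{A}^1$-invariant Nisnevich sheaves: the structure theorem for such sheaves in terms of Gersten-type resolutions, the contraction formalism for sheaves of abelian groups, the stable $\mb{A}^1$-connectivity theorem, and a computation of homotopy sheaves of low-weight motivic spheres in order to pin down the generators and relations. A more concrete alternative route is Cazanave's direct computation of the pointed naive $\mb{A}^1$-homotopy monoid $[\mb{P}^1, \mb{P}^1]^N$, whose group completion is $\GW(k)$; combining this with a comparison between naive and genuine $\mb{A}^1$-homotopy classes and then stabilizing recovers the $n \geq 2$ case with less of Morel's sheaf-theoretic overhead, at the cost of importing the naive-to-genuine comparison as a black box.
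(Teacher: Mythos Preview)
The paper does not prove this statement. It is quoted verbatim as a background result with the citation \cite[Corollary 1.24]{Mor12} and no argument is given; the paper immediately moves on to discuss the analogy with the classical Brouwer degree. So there is no ``paper's own proof'' to compare your proposal against.

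For what it is worth, your outline is a faithful high-level summary of how Morel actually obtains this isomorphism in \cite{Mor12}: the identification $\mb{P}^n/\mb{P}^{n-1}\simeq S^{2n,n}$, the Freudenthal-type stabilization for $n\geq 2$, the computation $\underline{\pi}_0(\mb{S}_k)\cong\underline{K}^{MW}_0$, and the presentation $K^{MW}_0(k)\cong\GW(k)$. You are also right that the hard step is the identification of $\underline{\pi}_0(\mb{S}_k)$, and your description of the machinery needed (strongly $\mb{A}^1$-invariant sheaves, contractions, stable connectivity) is accurate. The Cazanave alternative you mention is likewise a known workaround. None of this, however, appears in the present paper, which simply imports the result.
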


Morel's degree map is analagous to the Brouwer degree
\[\deg:[S^n,S^n]\xrightarrow{\cong}\mb{Z}.\]
One can apply Morel's degree to endomorphisms of $\mb{A}^n_k$ to obtain bilinear forms. The goal of $\mb{A}^1$-enumerative geometry is to perform this process in such a way that the resulting bilinear forms encode enumerative information. Later in this section, we will discuss how to circumvent explicitly using motivic homotopy theory in $\mb{A}^1$-enumerative geometry. First, we briefly discuss $\GW(k)$. The Grothendieck--Witt group $\GW(k)$ is actually a ring that admits a nice presentation.

\begin{prop}\label{prop:GW}\cite[II Theorem 4.1]{Lam05}
Let $k$ be a field. Given $a\in k^\times$, let $\langle a\rangle$ be the isomorphism class of the bilinear form $k\times k\to k$ defined by $(x,y)\mapsto axy$. Then $\GW(k)$ is the ring generated by all such $\langle a\rangle$, subject to the following relations.
\begin{enumerate}[(i)]
\item $\langle ab^2\rangle=\langle a\rangle$ for all $a,b\in k^\times$.
\item $\langle a\rangle\langle b\rangle=\langle ab\rangle$ for all $a,b\in k^\times$.
\item $\langle a\rangle+\langle b\rangle=\langle a+b\rangle+\langle ab(a+b)\rangle$ for all $a,b\in k^\times$ such that $a+b\neq 0$.
\item $\langle a\rangle+\langle -a\rangle=\langle 1\rangle+\langle -1\rangle$ for all $a\in k^\times$.
\end{enumerate}
\end{prop}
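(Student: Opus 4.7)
The plan is to verify this as the standard presentation of $\GW(k)$ in characteristic $\neq 2$, following the approach in Lam's book. The argument proceeds in three stages: generation, verification of the listed relations, and a demonstration that no further relations are required.

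First, I would show that every non-degenerate symmetric bilinear form over $k$ admits an orthogonal basis, so that it is isometric to some $\langle a_1\rangle+\cdots+\langle a_n\rangle$ in $\GW(k)$. This is a Gram--Schmidt-style induction that uses $\Char k \neq 2$ to extract a non-isotropic vector from any non-degenerate form. It follows that the classes $\langle a\rangle$ for $a\in k^\times$ generate $\GW(k)$ as an abelian group, and hence as a ring once relation (ii) is in place.

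Second, I would verify each relation directly. Relation (i) follows from the isometry $x \mapsto bx$ sending $\langle ab^2\rangle$ to $\langle a\rangle$; relation (ii) is the defining computation of $\langle a\rangle \otimes \langle b\rangle$; relation (iv) records that any hyperbolic plane $\langle a, -a\rangle$ is isometric to $\langle 1, -1\rangle$, via an explicit change of basis. Relation (iii) is the one nontrivial identity: the vector $(1,1)$ on $\langle a, b\rangle$ has self-pairing $a+b$, so when $a+b \neq 0$ Witt's extension lemma produces a splitting $\langle a,b\rangle \cong \langle a+b\rangle \oplus \langle c\rangle$, and comparing determinants modulo squares forces $c = ab(a+b)$.

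Third, and most substantively, I would need to show that relations (i)--(iv) are sufficient. Given any target ring $R$ equipped with symbols $[a]$ satisfying (i)--(iv), one wants a well-defined map $\GW(k) \to R$ sending $\langle a\rangle \mapsto [a]$. This is the content of Witt's chain equivalence theorem: any isometry between two rank-$n$ diagonal forms can be factored as a chain of simple equivalences acting on at most two diagonal entries at a time. Together with Witt cancellation (valid because $\Char k \neq 2$), this reduces well-definedness to the $2$-dimensional case, where the required identifications are exactly relations (i) and (iii). The main obstacle is precisely this last stage, where one must convert Witt's geometric cancellation theorem into a purely combinatorial statement about generators and relations; this is the heart of Lam's \S II.4 and is why the result is quoted here rather than reproved.
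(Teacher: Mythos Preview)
The paper does not prove this proposition at all: it is quoted directly from Lam with the citation \cite[II Theorem 4.1]{Lam05} and no proof is given. The only content the paper adds is the short paragraph immediately following the proposition, which observes that relation (iv) is redundant, deriving $\langle a\rangle+\langle -a\rangle=\langle 1\rangle+\langle -1\rangle$ from (i) and (iii) via the auxiliary identities $\langle -a\rangle+\langle a-1\rangle=\langle a^2-a\rangle+\langle -1\rangle$ and $\langle a\rangle+\langle a^2-a\rangle=\langle a^2\rangle+\langle a^2(a-1)\rangle$.

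Your three-stage sketch (diagonalization to get generation, direct verification of the relations, then Witt chain equivalence plus Witt cancellation to show sufficiency) is the correct outline of Lam's argument and is mathematically sound in characteristic $\neq 2$. It simply goes far beyond what the paper itself does. If you want to match the paper, you should replace your full sketch with a citation and, if anything, include only the redundancy-of-(iv) computation. One small point: your sketch explicitly invokes $\Char k\neq 2$ for diagonalization and Witt cancellation, whereas the proposition as stated in the paper says only ``Let $k$ be a field''; the paper's standing hypothesis $\Char k\neq 2$ is declared elsewhere, so this is consistent, but be aware that the presentation as written is not literally the one for characteristic $2$.
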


Relation (iv) actually follows from relations (i) and (iii). Indeed, (iii) implies that $\langle -a\rangle+\langle a-1\rangle=\langle a^2-a\rangle+\langle -1\rangle$ and that $\langle a\rangle+\langle a^2-a\rangle=\langle a^2\rangle+\langle a^2(a-1)\rangle$. Thus by (i), we have
\begin{align*}
    \langle a\rangle+\langle -a\rangle&=\langle a\rangle+\langle a^2-a\rangle+\langle -1\rangle-\langle a-1\rangle\\
    &=\langle a^2\rangle+\langle a^2(a-1)\rangle+\langle -1\rangle-\langle a-1\rangle\\
    &=\langle 1\rangle+\langle a-1\rangle+\langle -1\rangle-\langle a-1\rangle\\
    &=\langle 1\rangle+\langle -1\rangle.
\end{align*}

\begin{defn}
The isomorphism class $\mb{H}:=\langle 1\rangle+\langle -1\rangle$ is called the \textit{hyperbolic form}.
\end{defn}

In order to obtain enumerative statements over a given field, we apply \textit{field invariants} to our enumerative equation in $\GW(k)$. Field invariants can be thought of as group homomorphisms $\GW(k)\to G$ for some group $G$. For example:
\begin{enumerate}[(i)]
\item The rank of a bilinear form induces an isomorphism $\rank:\GW(\mb{C})\to\mb{Z}$.
\item The signature of a bilinear form (the number of $+1$s minus the number of $-1$s on the diagonal) induces a homomorphism $\sign:\GW(\mb{R})\to\mb{Z}$.
\item The discriminant of a bilinear form induces a homomorphism $\disc:\GW(\mb{F}_q)\to\mb{Z}/2\mb{Z}$ when $q$ is a power of an odd prime.
\end{enumerate}

See~\cite{Lam05} for a discussion on field invariants for various fields. See~\cite{KW21,LV19,McK21,CDH20} for examples of applying these field invariants to obtain enumerative statements over specific fields.

\subsection{Local $\mb{A}^1$-degrees}\label{sec:local degree}
Many results in $\mb{A}^1$-enumerative geometry are an application of the Poincar\'e--Hopf theorem for motivic Euler numbers. The Euler number computes a fixed element of $\GW(k)$, which constitutes the global count of objects in question. The Poincar\'e--Hopf theorem then states that this fixed value can be expressed as a sum of local indices. In general, these local indices are not fixed, but rather record some of the arithmetic and geometry of the specific objects being counted. Because these local indices will be defined in terms of the \textit{local $\mb{A}^1$-degree}, we use this section to quickly survey the relevant construction. See also~\cite{KW19} and~\cite[Section 4.2]{WW20}.

Let $f:\mb{A}^n_k\to\mb{A}^n_k$ be a morphism with an isolated zero $p\in\mb{A}^n_k$. This induces a map
\[f_p:\mb{A}^n_k/(\mb{A}^n_k-\{p\})\to\mb{A}^n_k/(\mb{A}^n_k-\{0\})\]
in the pointed unstable motivic homotopy category $\mc{H}_\bullet(k)$. The source and target of $f_p$ are Thom spaces in $\mc{H}_\bullet(k)$, and the purity theorem of Morel and Voevodsky~\cite{MV99} gives weak equivalences
\[\mb{A}^n_k/(\mb{A}^n_k-\{p\})\simeq\mb{P}^n_k/\mb{P}^{n-1}_k\wedge\Spec{k(p)}_+\quad\text{and}\quad\mb{A}^n_k/(\mb{A}^n_k-\{0\})\simeq\mb{P}^n_k/\mb{P}^{n-1}_k.\]
If $p$ is $k$-rational, then we thus have a map
\[\mb{P}^n_k/\mb{P}^{n-1}_k\simeq\mb{A}^n_k/(\mb{A}^n_k-\{p\})\xrightarrow{f_p}\mb{A}^n_k/(\mb{A}^n_k-\{0\})\simeq\mb{P}^n_k/\mb{P}^{n-1}_k\]
to which we can apply Morel's $\mb{A}^1$-degree. In general, we precompose with the collapse map $c_p:\mb{P}^n_k/\mb{P}^{n-1}_k\to\mb{P}^n_k/\mb{P}^{n-1}_k\wedge\Spec{k(p)}_+$, which is defined geometrically via the inclusion $\mb{P}^{n-1}_k\hookrightarrow\mb{P}^n_k-\{p\}$, yielding the diagram of cofibers $\mb{P}^n_k/\mb{P}^{n-1}_k\to\mb{P}^n_k/(\mb{P}^n_k-\{p\})$.

\begin{defn}
The \textit{local $\mb{A}^1$-degree} of a morphism $f:\mb{A}^n_k\to\mb{A}^n_k$ at an isolated zero $p$, denoted $\deg_p(f)\in\GW(k)$, is the $\mb{A}^1$-degree of the composite
\[\mb{P}^n_k/\mb{P}^{n-1}_k\xrightarrow{c_p}\mb{P}^n_k/\mb{P}^{n-1}_k\wedge\Spec{k(p)}_+\simeq\mb{A}^n_k/(\mb{A}^n_k-\{p\})\xrightarrow{f_p}\mb{A}^n_k/(\mb{A}^n_k-\{0\})\simeq\mb{P}^n_k/\mb{P}^{n-1}_k.\]
\end{defn}

Despite its technical definition in terms of motivic homotopy theory, the local $\mb{A}^1$-degree admits a convenient commutative algebraic formulation \cite{BMP21} (see also \cite{KW19,BBMMO21}). For our purposes, we will be able to compute all relevant local $\mb{A}^1$-degrees in terms of the Jacobian by \cite[Lemma 9]{KW19} (which stems from \cite[(4.7) Korollar]{SS75}).

\subsection{Euler numbers}\label{sec:Euler number}
The Euler numbers that we work with were introduced in \cite{KW21} and further studied in \cite{BW20}. In this section, we will recall the definition of these Euler numbers. See \cite[Section 1.1]{KW21} and the introduction of \cite{BW20} for a discussion of related notions of Euler classes and numbers in arithmetic geometry and motivic homotopy theory.

The most refined definition of the Bachmann--Kass--Wickelgren Euler number (hereafter, simply \textit{Euler number}) is given via coherent duality. We will follow \cite[Section 2.1]{BW20} in our review of the details. To begin, we need to recall the notion of relative orientation of a vector bundle.

\begin{defn}
Let $X$ be a $k$-scheme, and let $V\to X$ be a vector bundle. A \textit{relative orientation} of $V\to X$ is a line bundle $\mc{L}\to X$ and an isomorphism $\rho:\Hom(\det{V}^\vee,\omega_X)\xrightarrow{\cong}\mc{L}^{\otimes 2}$. We say that $V\to X$ is \textit{relatively orientable} if a relative orientation of $V\to X$ exists.
\end{defn}

Now suppose that $X$ is a smooth proper $k$-scheme of dimension $n$ with structure map $f:X\to\Spec{k}$. Let $V\to X$ be a relatively orientable vector bundle of rank $n$ with relative orientation $(\rho,\mc{L})$. This relative orientation defines an isomorphism $\rho':\det{V}^\vee\otimes\mc{L}^{\otimes 2}\to\omega_X$, and coherent duality defines a trace map $\eta_f:H^n(X,\omega_X)\to k$. For each $0\leq i,j\leq n$, we thus get a perfect pairing $\beta_{i,j}:H^i(X,\wedge^jV^\vee\otimes\mc{L})\otimes H^{n-i}(X,\wedge^{n-j}V^\vee\otimes\mc{L})\to k$ given by the composition
\[H^i(X,\wedge^jV^\vee\otimes\mc{L})\otimes H^{n-i}(X,\wedge^{n-j}V^\vee\otimes\mc{L})\xrightarrow{\smile}H^n(X,\det{V}^\vee\otimes\mc{L}^{\otimes 2})\xrightarrow{\rho'}H^n(X,\omega_X)\xrightarrow{\eta_f}k.\]
On the central term (i.e. when $2i=2j=n$), $\beta_{i,j}$ is a bilinear form on $H^i(X,\wedge^jV^\vee\otimes\mc{L})$. For all other $i,j$, the pairing $\beta_{i,j}\oplus\beta_{n-i,n-j}$ is a bilinear form on $H^i(X,\wedge^jV^\vee\otimes\mc{L})\oplus H^{n-i}(X,\wedge^{n-j}V^\vee\otimes\mc{L})$. It follows that $\sum_{0\leq i,j\leq n}(-1)^{i+j}\beta_{i,j}$ is a symmetric, non-degenerate bilinear form over $k$.

\begin{defn}\label{def:Euler number}
The \textit{Euler number} $e(V)\in\GW(k)$ of the vector bundle $V\to X$ is the isomorphism class of the bilinear form $\sum_{0\leq i,j\leq n}(-1)^{i+j}\beta_{i,j}$.
\end{defn}

\subsection{Local indices}
In classical algebraic topology, a powerful aspect of Euler numbers is the Poincar\'e--Hopf theorem. This is a local-to-global principle, which states that variable local behavior (as measured by local indices) is governed by a fixed global invariant, namely the Euler number. As proved in~\cite[Theorem 1.1]{BW20}, the Poincar\'e--Hopf theorem also holds for these motivic Euler numbers. In this section, we will describe and discuss local indices for the Euler number. These local indices will be elements of $\GW(k)$ determined by the local behavior of a given section $\sigma$ along its vanishing locus.

To begin, we need a suitable notion of local coordinates, as well a local trivialization that are compatible with these coordinates in a precise way. These coordinates and trivialization will allow us to turn a section $\sigma:X\to V$ of a rank $n$ vector bundle on an $n$-dimensional scheme into an endomorphism $f:\mb{A}^n_k\to\mb{A}^n_k$. We will then compute the local index of $\sigma$ at a zero $p\in X$ by computing an analog of the local Brouwer degree of $f$ at the image of $p$. These ideas and tools were introduced in~\cite{KW19,KW21}.

\begin{defn}
Let $X$ be a smooth $k$-scheme of dimension $n$. Let $p\in X$ be a closed point. \textit{Nisnevich coordinates} around $p$ consist of a Zariski open neighborhood $U\subseteq X$ containing $p$ and an \'etale morphism $\vphi:U\to\mb{A}^n_k$ that induces an isomorphism $k(p)\cong k(\vphi(p))$ of residue fields.
\end{defn}

By~\cite[Lemma 19]{KW21}, there are Nisnevich coordinates around any closed point on any smooth $k$ scheme of dimension at least 1.

As previously mentioned, we will define the local index of a section in terms of the local degree of a morphism determined by the section and a choice of a local trivialization and Nisnevich coordinates. In order to ensure that this local index does not depend on our choice of coordinates or trivialization, we have to impose the following compatibility condition. Note that this compatibility also involves the relative orientation used to define the Euler number of our vector bundle, as outlined in Section~\ref{sec:Euler number}.

\begin{setup}\label{setup:compatibility}
Let $X$ be a smooth $k$-scheme of dimension $n$. Let $(U,\vphi)$ be Nisnevich coordinates around a closed point $p\in X$. Since $\vphi$ is \'etale, the pullback by $\vphi$ of the standard basis of $T\mb{A}^n_k$ is a basis of $TX|_U$. We denote the dual of this basis by $d\vphi$, whose determinant $\det(d\vphi)$ is a basis of $\det(TX|_U^\vee)=\omega_X|_U$.

Now let $V\to X$ be a relatively orientable vector bundle with relative orientation $(\rho,\mc{L})$. Let $\psi:V|_U\xrightarrow{\cong} U\times\mb{A}^n_k\xrightarrow{\pi_2}\mb{A}^n_k$ be a local trivialization of $V$, followed by projection away from the base. We can view $\psi$ as a basis of $V|_U$ (ignoring the base $U$), so $\det(\psi)$ is a basis of $\det(V|_U)$.

Finally, let $\eta\in\Hom(\det{V}|_U^\vee,\omega_X|_U)$ be the homomorphism defined by $\eta(\det(\psi)^\vee)=\det(d\vphi)$. That is, $\eta$ is the map sending the distinguished basis $\det(\psi)^\vee$ of $\det(V|_U)^\vee$ (determined by the trivialization $\psi$) to the distinguished basis $\det(d\vphi)$ of $\omega_X|_U$ (determined by the Nisnevich coordinates $(U,\vphi)$).
\end{setup}

\begin{defn}\label{def:compatibility}
Assume the notation of Setup~\ref{setup:compatibility}.  The local trivialization $\psi$ of $V|_U$ is said to be \textit{compatible} with the relative orientation $(\rho,\mc{L})$ and the Nisnevich coordinates $(U,\vphi)$ if $\rho(\eta)=\ell\otimes\ell$ for some $\ell\in\mc{L}$.
\end{defn}

Roughly speaking, the compatibility condition given in Definition~\ref{def:compatibility} states that changing our choice of Nisnevich coordinates, local trivialization, or relative orientation, provided that these data are compatible with each other, will only change the local index of a section by a square. Our local index will be valued in $\GW(k)$, so such squares will be trivial by Proposition~\ref{prop:GW} (i). By~\cite[Proposition 5.5]{McK22}, we can always find a local trivialization compatible with our chosen Nisnevich coordinates.

Heuristically, we define the local index as follows. Let $V\to X$ be a relatively orientable vector bundle with a section $\sigma:X\to V$. Let $p\in X$ be an isolated zero of $\sigma$, and let $(U,\vphi)$ be Nisnevich coordinates around $p$. Finally, let $\psi:V|_U\to\mb{A}^n_k$ be a local trivialization that is compatible with the given Nisnevich coordinates and some given relative orientation of $V\to X$. If $\vphi:U\to\mb{A}^n_k$ were an isomorphism, we could form the composite
\[f:=\psi\circ\sigma|_U\circ\vphi^{-1}:\mb{A}^n_k\to\mb{A}^n_k.\]
The morphism $f$ would vanish at the point $\vphi(p)$. We then define the local index $\ind_p\sigma$ to be the local $\mb{A}^1$-degree of $f$ at $\vphi(p)$. Of course, if $\vphi$ is not an isomorphism, then $\vphi^{-1}$ does not exist. Nevertheless, the assumption that $p$ is an isolated zero of $\sigma$ means that $\mc{O}_{\mb{V}(\sigma),p}$ is a zero-dimensional ring. In particular, the inverse $\vphi^{-1}$ exists up to some power of the ideal corresponding to $p$. Using this, one can show that $\mc{O}_{\mb{V}(\sigma),p}$ is isomorphic as a local ring to \[k[x_1,\ldots,x_n]_{\vphi(p)}/(f_1,\ldots,f_n)\] 
for some polynomial map $f=(f_1,\ldots,f_n):\mb{A}^n_k\to\mb{A}^n_k$~\cite[Lemma 27]{KW21}. The presentation of this local ring canonically determines a bilinear form~\cite{SS75} whose isomorphism class is $\deg_{\vphi(p)}(f)$~\cite{KW19,BBMMO21}.

\begin{defn}\label{def:local index}
The \textit{local index} of $\sigma:X\to V$ at $p$ is
\[\ind_p(\sigma):=\deg_{\vphi(p)}(f)\]
where $f$ is the morphism determined by our choice of Nisnevich coordinates, relative orientation, and compatible local trivialization. By~\cite[Corollary 31]{KW21}, the local index is independent of these choices.
\end{defn}

\section{Coordinates, trivializations, and relative orientability}
Let $X=\mb{P}\Sym^2(\mc{S}^\vee)$ and $E=\mc{O}_X(1)\otimes\pi^*\mc{O}_{\mb{G}(2,3)}(2)$ (where $\pi:X\to\mb{G}(2,3)$ is projection to the base), as described in Section~\ref{sec:approach}. We first prove that $E^{\oplus 8}\to X$ is relatively orientable.

\begin{lem}\label{lem:rel orientable}
The vector bundle $E^{\oplus 8}\to X$ is relatively orientable.
\end{lem}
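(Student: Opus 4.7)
The plan is to check the definition of relative orientability directly: since $\mc{O}_X(1)^{\oplus 8}$ is a rank-$8$ bundle on the $8$-dimensional scheme $X$, I need to show that $\det(\mc{O}_X(1)^{\oplus 8}) \otimes \det(TX)^\vee$ is isomorphic to $L^{\otimes 2}$ for some line bundle $L$ on $X$. One side is immediate: $\det(\mc{O}_X(1)^{\oplus 8}) = \mc{O}_X(8)$, so the substance is computing $\det(TX)$.

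I would compute $\det(TX)$ by splitting along the projection $\pi\colon X \to \mb{G}(2,3) \cong \mb{P}^3$, using the short exact sequence
\[0 \to T_{X/\mb{G}(2,3)} \to TX \to \pi^* T_{\mb{G}(2,3)} \to 0.\]
On the base, the Euler sequence on $\mb{P}^3$ gives $\det T_{\mb{G}(2,3)} = \mc{O}(4)$. For the fiber direction, the relative Euler sequence for the projective bundle $X = \mb{P}(\Sym^2 \mc{S}^\vee)$ (in the classical convention, matching the authors' identification of points of $X$ with projective classes of quadratic forms on $H$) yields $\det T_{X/\mb{G}(2,3)} = \mc{O}_X(6) \otimes \pi^* \det(\Sym^2 \mc{S}^\vee)$.

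The one input that requires care is $\det(\Sym^2 \mc{S}^\vee) = (\det \mc{S}^\vee)^{\otimes 4} = \mc{O}(4)$, using the identity $\det(\Sym^2 V) = (\det V)^{\otimes (n+1)}$ for a rank-$n$ bundle with $n = 3$, together with the fact that $\det \mc{S}^\vee$ is the Plücker hyperplane bundle on $\mb{G}(2,3) \cong \mb{P}^3$. Assembling, $\det(TX) = \mc{O}_X(6) \otimes \pi^* \mc{O}(8)$, and hence
\[\det(\mc{O}_X(1)^{\oplus 8}) \otimes \det(TX)^\vee = \mc{O}_X(2) \otimes \pi^* \mc{O}(-8) = \bigl(\mc{O}_X(1) \otimes \pi^* \mc{O}(-4)\bigr)^{\otimes 2},\]
which is the desired tensor square. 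The steps are all routine; the only place one might slip is in tracking sign conventions in the projective bundle Euler sequence, but since both $\det T_{X/\mb{G}(2,3)}$ and $\det(\Sym^2 \mc{S}^\vee)$ pick up any such ambiguity in parallel, the squareness of the final expression is unaffected.
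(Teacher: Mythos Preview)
Your proof is correct and follows essentially the same approach as the paper: both compute the canonical bundle of $X$ via the projective bundle structure over $\mb{G}(2,3)\cong\mb{P}^3$ using the relative Euler sequence and the identity $\det\Sym^2 V=(\det V)^{\otimes 4}$ for rank-$3$ bundles, then observe that $\det(\mc{O}_X(1)^{\oplus 8})\otimes\omega_X$ is a tensor square. The only visible difference is a sign convention for $\det\mc{S}$: you take $\det\mc{S}^\vee\cong\mc{O}(1)$ (the standard Pl\"ucker convention), whereas the paper writes $\det\mc{S}\cong\mc{O}(1)$, so their pullback terms cancel to give the square root $\mc{O}_X(1)$ while yours combine to give $\mc{O}_X(1)\otimes\pi^*\mc{O}(-4)$; either way the exponent $4$ is even, so the conclusion is unaffected.
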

\begin{proof}
In order to show that $E^{\oplus 8}\to X$ is relatively orientable, we need to show that $\det{E^{\oplus 8}}\otimes\omega_X$ is the tensor square of a line bundle, where $\omega_X$ is the canonical bundle of $X$. Since $E^{\oplus 8}$ is a direct sum of line bundles, we have $\det{E^{\oplus 8}}\cong\mc{O}_X(8)\otimes\pi^*\mc{O}_{\mb{G}(2,3)}(16)$. 

Given a vector bundle $V\to Y$ of rank $r$, the canonical bundle of $\pi:\mb{P}V\to Y$ is given by
\begin{align}\label{eq:canonical bundle}
    \omega_{\mb{P}V}\cong\mc{O}_{\mb{P}V}(-r)\otimes\pi^*\det{V^\vee}\otimes\pi^*\omega_Y.
\end{align}
This can be computed via the short exact sequence of tangent bundles
\[0\to T_{\mb{P}V/Y}\to T_{\mb{P}V}\to\pi^*T_Y\to 0\]
and the tautological exact sequence
\[0\to\mc{O}_{\mb{P}V}(-1)\to\pi^*V\to\mc{Q}\to 0,\]
where $\mc{Q}$ is the tautological quotient bundle of $\mb{P}V\to Y$. 

Before applying Equation~\ref{eq:canonical bundle} to $X=\mb{P}\Sym^2(\mc{S}^\vee)\to\mb{G}(2,3)$, we need to calculate $\det\Sym^2(\mc{S}^\vee)^\vee$ and $\omega_{\mb{G}(2,3)}$. Since $\mb{G}(2,3)\cong\mb{P}^3$, we have $\omega_{\mb{G}(2,3)}\cong\mc{O}_{\mb{G}(2,3)}(-4)$. Recall that if $\mc{E}$ is a vector bundle of rank $r$, then $\det\Sym^n(\mc{E})=(\det\mc{E})^{\otimes\binom{r+n-1}{r}}$. Since $\mc{S}^\vee$ has rank 3 and $\det(\mc{S})\cong\mc{O}_{\mb{G}(2,3)}(1)$, we have $\det\Sym^2(\mc{S}^\vee)^\vee=(\det\mc{S})^{\otimes 4}\cong \mc{O}_{\mb{G}(2,3)}(4)$. Equation~\ref{eq:canonical bundle} thus gives us
\begin{align*}
    \omega_X&\cong\mc{O}_X(-6)\otimes\pi^*\mc{O}_{\mb{G}(2,3)}(4)\otimes\pi^*\mc{O}_{\mb{G}(2,3)}(-4)\\
    &\cong\mc{O}_X(-6).
\end{align*}
Thus $\det E^{\oplus 8}\otimes\omega_X\cong\mc{O}_X(2)\otimes\pi_*\mc{O}_{\mb{G}(2,3)}(16)\cong(\mc{O}_X(1)\otimes\pi_*\mc{O}_{\mb{G}(2,3)}(8))^{\otimes 2}$, as desired.
\end{proof}

Since $E^{\oplus 8}$ is relatively orientable and of rank equal to the dimension of $X=\mb{P}\Sym^2(\mc{S}^\vee)$, this bundle has a well-defined Euler number. We now compute the Euler number $e(E^{\oplus 8})$ using~\cite[Lemma 5 and Proposition 19]{SW21}, which will constitute the fixed global count of conics meeting 8 general lines.

\begin{lem}\label{lem:euler class}
The vector bundle $E^{\oplus 8}\to X$ has Euler number $e(E^{\oplus 8})=46\cdot\mb{H}$.
\end{lem}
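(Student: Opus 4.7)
The plan is to invoke \cite[Proposition 19]{SW21}, prepared by \cite[Lemma 5]{SW21}, which computes the $\mb{A}^1$-Euler class of an even direct sum $L^{\oplus 2m}$ of a line bundle on a smooth projective base under the assumption of relative orientability. The structural conclusion of that result is that such an Euler class lies in the subgroup of $\GW(k)$ generated by the hyperbolic form $\mb{H}$, with integer coefficient determined by a classical top Chern number.

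First I would verify the hypotheses. Lemma~\ref{lem:rel orientable} just established relative orientability of $\mc{O}_X(1)^{\oplus 8}$; moreover the bundle is an $8$-fold (in particular, even-fold) direct sum of a single line bundle $\mc{O}_X(1)$, so the even-parity condition of \cite[Proposition 19]{SW21} is satisfied. It follows that $e(\mc{O}_X(1)^{\oplus 8}) = m\cdot\mb{H}$ for some integer $m$. The intuition is that the summands can be grouped in pairs, each pair contributing hyperbolically to the local indices that assemble into the Euler class.

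Next I would determine $m$ by applying the rank homomorphism $\mathrm{rank}\colon\GW(k)\to\mb{Z}$. The $\mb{A}^1$-Euler class maps under rank to the classical Euler number $\int_X c_1(\mc{O}_X(1))^8 = 92$, as recalled in Theorem~\ref{thm:main-classical} and in the paragraph following Equation~\ref{eq:sum of degrees}. Since $\mathrm{rank}(\mb{H})=2$, this forces $m=46$, so $e(\mc{O}_X(1)^{\oplus 8}) = 46\cdot\mb{H}$.

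The main obstacle, such as it is, is checking that the formalism of \cite[Proposition 19]{SW21} applies verbatim to our projective bundle $X = \mb{P}\Sym^2(\mc{S}^\vee)$ over $\mb{G}(2,3)\cong\mb{P}^3_k$ and to the relative orientation implicit in the proof of Lemma~\ref{lem:rel orientable} (where $\det\mc{O}_X(1)^{\oplus 8}\otimes\omega_X\cong\mc{O}_X(1)^{\otimes 2}$). Once the hypotheses are confirmed, the only remaining content is the classical computation $\int_X c_1(\mc{O}_X(1))^8=92$, which is the well-known complex count of Theorem~\ref{thm:main-classical}.
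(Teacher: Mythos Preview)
Your proposal is correct and follows essentially the same approach as the paper: invoke \cite[Proposition 19]{SW21} to conclude that $e(\mc{O}_X(1)^{\oplus 8})$ is an integer multiple of $\mb{H}$, then apply the rank homomorphism together with the classical count $\int_X c_1(\mc{O}_X(1))^8=92$ (via \cite[Lemma 5]{SW21}) to pin down the coefficient as $46$. The only cosmetic difference is that the paper explicitly names the splitting $\mc{E}=\mc{O}_X(1)^{\oplus 7}$, $\mc{E}'=\mc{O}_X(1)$ when invoking \cite[Proposition 19]{SW21}, rather than framing it as an even-parity condition.
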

\begin{proof}
Apply~\cite[Proposition 19]{SW21} with $\mc{V}:=E^{\oplus 7}$ and $\mc{V}':=E$. It follows that $e(E^{\oplus 8})$ is of the form $n\cdot\mb{H}$, which has rank $2n$. Taking the rank of this Euler number recovers the classical count $\int_X c_1(E)^8=92$~\cite[Lemma 5]{SW21}, so we have $e(E^{\oplus 8})=46\cdot\mb{H}$.
\end{proof}

The Euler number $e(E^{\oplus 8})$ gives us half of our desired enumerative formula. In order to complete this enumerative formula, we need to express $e(E^{\oplus 8})$ as a sum of local contributions (see \cite[Theorem 3]{KW21}). To start, recall the section $\sigma:X\to E^{\oplus 8}$ (described in Section~\ref{sec:approach}) whose vanishing locus corresponds to the set of conics meeting 8 general lines $L_1,\ldots,L_8\subset\mb{P}^3_k$. Our enumerative formula comes from computing the local indices $\ind_{(H,q)}(\sigma)$ in the decomposition
\[e(E^{\oplus 8})=\sum_{(H,q)\in\sigma^{-1}(0)}\ind_{(H,q)}(\sigma).\]

Our next step is to give Nisnevich coordinates for $X$ and local trivializations of $E^{\oplus 8}$ that are compatible with the relative orientation implicit in Lemma~\ref{lem:rel orientable}.

\subsection{Nisnevich coordinates and local trivializations}\label{sec:coords}
In order to compute the local index $\ind_{(H,q)}(\sigma)$ of a conic $(H,q)$ meeting the lines $L_1,\ldots,L_8$, we need to describe Nisnevich coordinates for $X=\mb{P}\Sym^2(\mc{S}^\vee)$ and local trivializations of $E^{\oplus 8}$.

\subsubsection{Coordinates}
Since $X$ is a $\mb{P}^5_k$-bundle over $\mb{G}(2,3)\cong\mb{P}^3_k$, the standard affine covers of $\mb{P}^3_k$ and $\mb{P}^5_k$ yield a convenient affine cover of $X$. However, we will need to slightly modify the standard cover of $\mb{P}^5_k$ for our purposes. Let $U_i=\{x_i\neq 0\}\subset\mb{P}^3_k$. Let $V_j=\{\ell_j\neq 0\}\subset\mb{P}^5_k$, where
\[\ell_j=\begin{cases}
x_j & 0\leq j\leq 2,\\
x_1+x_2+x_3 & j=3,\\
x_0+x_2+x_4 & j=4,\\
x_0+x_1+x_5 & j=5.
\end{cases}\]
Let $u_i:U_i\to\mb{A}^3_k$ and $v_j:V_j\to\mb{A}^5_k$ be given by 
\begin{align*}
\pcoor{x_0:\cdots:x_3}&\mapsto(\tfrac{x_0}{x_i},\ldots,\tfrac{x_{i-1}}{x_i},\tfrac{x_{i+1}}{x_i},\ldots,\tfrac{x_3}{x_i})\quad\text{and}\\
\pcoor{x_0:\cdots:x_5}&\mapsto(\tfrac{x_0}{\ell_j},\ldots,\tfrac{x_{j-1}}{\ell_j},\tfrac{x_{j+1}}{\ell_j},\ldots,\tfrac{x_5}{\ell_j}),
\end{align*}
respectively. Then $u_i\times v_j:U_i\times V_j\to\mb{A}^8_k$ are affine coordinates on $\mb{P}^3_k\times\mb{P}^5_k$. Since $X$ is affine-locally isomorphic to $\mb{P}^3_k\times\mb{P}^5_k$, the affine coordinates $(U_i\times V_j,u_i\times v_j)$ induce affine coordinates $(W_{ij},w_{ij})$ on $X$. Note that $v_j^{-1}$ is given by
\[(\tfrac{x_0}{\ell_j},\ldots,\tfrac{x_{j-1}}{\ell_j},\tfrac{x_{j+1}}{\ell_j},\ldots,\tfrac{x_5}{\ell_j})\mapsto\pcoor{\tfrac{x_0}{x_j}:\cdots:\tfrac{x_{j-1}}{\ell_j}:1-\tfrac{\ell_j-x_j}{\ell_j}:\tfrac{x_{j+1}}{\ell_j}:\cdots:\tfrac{x_5}{\ell_j}}\]
for $3\leq j\leq 5$, where $\frac{\ell_j-x_j}{\ell_j}$ is the sum of two coordinates in $(\tfrac{x_0}{\ell_j},\ldots,\tfrac{x_{j-1}}{\ell_j},\tfrac{x_{j+1}}{\ell_j},\ldots,\tfrac{x_5}{\ell_j})$.

\begin{prop}
The affine coordinates $w_{ij}:W_{ij}\to\mb{A}^8_k$ are Nisnevich coordinates on $X$.
\end{prop}
\begin{proof}
Under the affine-local isomorphism $X\cong\mb{P}^3_k\times\mb{P}^5_k$, it suffices to prove the statement for the affine coordinates $(U_i\times V_j,u_i\times v_j)$. But $u_i:U_i\to\mb{A}^3_k$ and $v_j:V_j\to\mb{A}^5_k$ are isomorphisms and are hence \'etale morphisms that induce isomorphisms of residue fields on closed points.
\end{proof}

Since the coordinates $w_{ij}:W_{ij}\to\mb{A}^8_k$ are isomorphisms, we can also consider the inverse maps $\vphi_{ij}:=w_{ij}^{-1}:\mb{A}^8_k\to W_{ij}$. Using the isomorphisms $\mb{G}(2,3)\cong\mb{P}^3_k$ and $\mb{P}\Sym^2(H^\vee)\cong\mb{P}^5_k$ (where $H\in\mb{G}(2,3)$), we can describe $\vphi_{ij}$ at the level of points by parameterizing homogeneous quadratic polynomials on planes in $\mb{P}^3_k$.

\begin{notn}
Given a polynomial $f$ over $k$, let $[f]$ denote the equivalence class of all $k^\times$ multiples of $f$.
\end{notn}

\begin{notn}\label{notn:coords}
Let $\bm{a}=\pcoor{a_0:a_1:a_2:a_3}\in \mb{P}^3_k$, and let $\bm{b}=\pcoor{b_0:\cdots:b_5}\in \mb{P}^5_k$. Let $H_{\bm{a}}=\mb{V}(\sum_{i=0}^3 a_ix_i)$, which defines the isomorphism $\mb{P}^3_k\to\mb{G}(2,3)$. The isomorphism $\mb{P}^5_k\to\mb{P}\Sym^2(H_{\bm{a}}^\vee)$ is given by choosing coordinates $\pcoor{y_0:y_1:y_2}$ on $H_\bm{a}$ and assigning
\[\bm{b}\mapsto q_{\bm{b}}(y_0,y_1,y_2):=[b_0y_0^2+b_1y_1^2+b_2y_2^2+b_3y_1y_2+b_4y_0y_2+b_5y_0y_1].\]
\end{notn}

Since there is no canonical dual basis for a given plane, there is no canonical isomorphism $\mb{P}^5_k\to\mb{P}\Sym^2(H^\vee)$. In defining $\vphi_{ij}:\mb{A}^8_k\to W_{ij}$, we choose the isomorphism $\mb{P}^5_k\to\mb{P}\Sym^2(H^\vee)$ that conforms with a particular choice of dual bases for the planes in $\pi(W_{ij})\subset\mb{G}(2,3)$.

\begin{prop}\label{prop:coords}
Let $0\leq i\leq 3$ and $0\leq j\leq 5$. Let $\alpha_i=u_i^{-1}$ and $\beta_j=v_j^{-1}$. Then on $k$-points, $\vphi_{ij}:\mb{A}^8_k\to X$ is given by 
\[(a_1,a_2,a_3,b_1,\cdots,b_5)\mapsto(H_{\alpha_i(a_1,a_2,a_3)},q_{\beta_j(b_1,\cdots,b_5)}(y_{i0},y_{i1},y_{i2})),\] 
where $\pcoor{y_{i0}:y_{i1}:y_{i2}}=\pcoor{x_0:\ldots:\hat{x}_i:\ldots:x_3}$.
\end{prop}
\begin{proof}
Since $H_{\bm{a}}:\mb{P}^3_k\to\mb{G}(2,3)$ and $q_{\bm{b}}:\mb{P}^5_k\to\mb{P}\Sym^2(H^\vee)$ yield the desired isomorphisms, it remains to treat the choice of coordinates on $H$ as this plane changes. The coordinates $\pcoor{y_{i0}:y_{i1}:y_{i2}}$ are the standard coordinates of the plane $\mb{V}(x_i)\subset\mb{P}^3_k$. Since the plane $H_{\alpha_i(a_1,a_2,a_3)}$ surjects onto $\mb{V}(x_i)$ under projection, $\pcoor{y_{i0}:y_{i1}:y_{i2}}$ are indeed projective coordinates on $H_{\alpha_i(a_1,a_2,a_3)}$. By construction, our choice of coordinates on $H_{\alpha_i(a_1,a_2,a_3)}$ is constant on $W_{ij}$.
\end{proof}

\begin{rem}
Since the Grassmannian $\mb{G}(2,3)$ and the space of conics $X$ respect base change, Proposition~\ref{prop:coords} also describes $\vphi_{ij}$ on $k'$-points for any finite extension $k'$ of $k$.
\end{rem}

\subsubsection{Trivializations}
Next, we give local trivializations $\psi_{ij}:E^{\oplus 8}|_{W_{ij}}\to W_{ij}\times\mb{A}^8_k$. We do this by describing local trivializations $\psi'_{ij}:E|_{W_{ij}}\to W_{ij}\times\mb{A}^1_k$ and setting $\psi_{ij}=\bigoplus_{\ell=1}^8\psi'_{ij}$. (Later, we will conflate $\psi_{ij}$ with the composite $E^{\oplus 8}|_{W_{ij}}\to W_{ij}\times\mb{A}^8_k\to\mb{A}^8_k$.) To define $\psi'_{ij}$, it suffices to construct a non-vanishing section $\tau_{ij}:W_{ij}\to E|_{W_{ij}}$. We will accomplish this by choosing a line $T_{ij}$ such that the evaluation section $\mr{ev}_{T_{ij}}\circ z:W_{ij}\to E|_{W_{ij}}$ is non-vanishing.

\begin{defn}
Let $\pcoor{y_{i0}:y_{i1}:y_{i2}}=\pcoor{x_0:\cdots:\hat{x}_i:\cdots:x_3}$. For $0\leq i\leq 3$ and $0\leq j\leq 2$, let $T_{ij}:=\mb{V}(\{y_{i\ell}\}_{\ell\neq j})$. For $0\leq i\leq 3$ and $3\leq j\leq 5$, let $T_{ij}:=\mb{V}(y_{i,j-3},y_{im}-y_{in})$, where $m<n$ and $\{j-3,m,n\}=\{0,1,2\}$. These lines are chosen so that
\[T_{ij}\cap H_{\bm{a}}=\begin{cases}
\pcoor{-a_1:a_0:0:0} & (i,j)=(0,0),\\
\pcoor{-a_2:0:a_0:0} & (i,j)=(0,1),\\
\pcoor{-a_3:0:0:a_0} & (i,j)=(0,2),\\
 &\vdots\\
\pcoor{a_3:0:0:-a_0} & (i,j)=(3,0),\\
\pcoor{0:a_3:0:-a_1} & (i,j)=(3,1),\\
\pcoor{0:0:a_3:-a_2} & (i,j)=(3,2),
\end{cases}
\quad
\begin{cases}
\pcoor{-a_2-a_3:0:a_0:a_0} & (i,j)=(0,3),\\
\pcoor{-a_1-a_3:a_0:0:a_0} & (i,j)=(0,4),\\
\pcoor{-a_1-a_2:a_0:a_0:0} & (i,j)=(0,5),\\
 &\vdots\\
\pcoor{a_3:a_3:0:-a_0-a_1} & (i,j)=(3,3),\\
\pcoor{a_3:0:a_3:-a_0-a_2} & (i,j)=(3,4),\\
\pcoor{0:a_3:a_3:-a_1-a_2} & (i,j)=(3,5).
\end{cases}\]
Next, let $\tau_{ij}:=\mr{ev}_{T_{ij}}\circ z:W_{ij}\to E|_{W_{ij}}$ (see Section~\ref{sec:approach}). On points, this section is given by
\[\tau_{ij}(H_{\bm{a}},q_{\bm{b}})=((H_\bm{a},q_\bm{b}),q_\bm{b}\text{ mod }Q(H_\bm{a})_{T_{ij}\cap H_\bm{a}}),\]
where $Q(H_\bm{a})_{T_{ij}\cap H_\bm{a}}$ is the space of homogeneous quadratic polynomials on $H_\bm{a}$ that vanish on $T_{ij}\cap H_\bm{a}$. By construction, $q_\bm{b}$ does not vanish on $T_{ij}\cap H_\bm{a}$ for $(H_\bm{a},q_\bm{b})\in W_{ij}$, since
\[q_{\bm{b}}(T_{ij}\cap H_{\bm{a}})=\begin{cases}
a_i^2b_j & 0\leq j\leq 2,\\
a_i^2(b_1+b_2+b_3) & j=3,\\
a_i^2(b_0+b_2+b_4) & j=4,\\
a_i^2(b_0+b_1+b_5) & j=5
\end{cases}\]
and
\[Q(H_\bm{a})_{T_{ij}\cap H_{\bm{a}}}\cong\begin{cases}
\{b_j=0\} & 0\leq j\leq 2,\\
\{b_1+b_2+b_3= 0\} & j=3,\\
\{b_0+b_2+b_4= 0\} & j=4,\\
\{b_0+b_1+b_5= 0\} & j=5.
\end{cases}\]
Thus $\tau_{ij}$ is a non-vanishing section on $W_{ij}$ and hence determines a local trivialization of $E|_{W_{ij}}$.
\end{defn}

\subsection{Compatibility}
Using our Nisnevich coordinates $w_{ij}=\vphi_{ij}^{-1}$ and local trivializations $\psi_{ij}$, we will compute the local index $\ind_{(H,q)}(\sigma)$ in terms of the local $\mb{A}^1$-degree $\deg^{\mb{A}^1}_{w_{ij}(H,q)}(\psi_{ij}\circ\sigma\circ\vphi_{ij})$ via Definition~\ref{def:local index}. In order to do this, we need to certify that the trivialization $\psi_{ij}$ is compatible with the Nisnevich coordinates $w_{ij}$ and the relative orientation
\[\Hom(\det (E^{\oplus 8})^\vee,\omega_X)|_{W_{ij}}\cong(\mc{O}_X(1)\otimes\pi^*\mc{O}_{\mb{G}(2,3)}(8))^{\otimes 2}|_{W_{ij}}\]
from Lemma~\ref{lem:rel orientable}. First, let $\det dw_{ij}\in H^0(W_{ij},\omega_X)$ be the non-vanishing section determined by our Nisnevich coordinates on $W_{ij}$. Let
\begin{align*}
    \gamma_{ij}:\Hom(\det (E^{\oplus 8})^\vee,\omega_X)|_{W_{ij}}\xrightarrow{\cong}\Hom(\det (E^{\oplus 8})^\vee,\mc{O}_X(-6))|_{W_{ij}}
\end{align*}
be the isomorphism induced by the isomorphism $\omega_X|_{W_{ij}}\cong\mc{O}_X(-6)|_{W_{ij}}$ sending $\det dw_{ij}$ to the non-vanishing local section $(a_i,\ell_j)^{-6}\in H^0(W_{ij},\mc{O}_X(-6))$. Let
\begin{align*}
\delta_{ij}:\Hom(\det (E^{\oplus 8})^\vee,\mc{O}_X(-6))|_{W_{ij}}&\xrightarrow{\cong}\mc{O}_X(8)\otimes\pi^*\mc{O}_{\mb{G}(2,3)}(16)\otimes\mc{O}_X(-6)|_{W_{ij}}\\
&\xrightarrow{\cong}(\mc{O}_X(1)\otimes\pi^*\mc{O}_{\mb{G}(2,3)}(8))^{\otimes 2}|_{W_{ij}}
\end{align*}
be the isomorphism given by sending
\begin{align*}
[\det\psi_{ij}^\vee\mapsto(a_i,\ell_j)^{-6}]&\mapsto(a_i,\ell_j)^8\otimes\pi^*(a_i^2)^8\otimes(a_i,\ell_j)^{-6}\\
&\mapsto((a_i,\ell_j)\otimes\pi^*(a_i)^8)^{\otimes 2}.
\end{align*}

\begin{prop}\label{prop:compatible}
The local trivializations $\psi_{ij}$ are compatible with the Nisnevich coordinates $w_{ij}$ and the relative orientation
\[\varpi_{ij}:=\delta_{ij}\circ\gamma_{ij}:\Hom(\det E^{\oplus 8},\omega_X)|_{W_{ij}}\xrightarrow{\cong}(\mc{O}_X(1)\otimes\pi^*\mc{O}_{\mb{G}(2,3)}(8))^{\otimes 2}|_{W_{ij}}.\]
\end{prop}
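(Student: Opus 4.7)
The plan is to unpack the compatibility condition of \cite[Definition 21]{KW21} and verify it by tracing the element of $\Hom(\det T_X,\det\mc{O}_X(1)^{\oplus 8})|_{U_{ij}}$ distinguished by $\vphi_{ij}$ and $\psi_{ij}$ through the two pieces of $\varpi_{ij}=\delta_{ij}\circ\gamma_{ij}$. Concretely, compatibility amounts to checking that the morphism $\widehat{\varpi}_{ij}\colon\det T_X|_{U_{ij}}\to\det\mc{O}_X(1)^{\oplus 8}|_{U_{ij}}$ sending $\partial_{ij}$ to the wedge $\bigwedge_{\ell=1}^{8}\tau_{ij}$ of the trivializing sections is carried by $\varpi_{ij}$ to a tensor square in $\mc{O}_X(1)^{\otimes 2}|_{U_{ij}}$.

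First I would apply $\gamma_{ij}$, which by construction replaces $\partial_{ij}$ by $(a_i^2b_j)^6$ in the domain. This converts $\widehat{\varpi}_{ij}$ into the morphism $(a_i^2b_j)^6\mapsto \bigwedge_\ell \tau_{ij}$ in $\Hom(\mc{O}_X(6),\det\mc{O}_X(1)^{\oplus 8})|_{U_{ij}}$. Next, the canonical identification $\det\mc{O}_X(1)^{\oplus 8}\cong\mc{O}_X(8)$ sends $\bigwedge_\ell \tau_{ij}$ to $\tau_{ij}^{\otimes 8}$. The crucial numerical input, already recorded in the definition of $\tau_{ij}$, is that $q_\bm{b}(T_{ij}\cap H_\bm{a})=a_i^2b_j$; this is precisely the statement that $\tau_{ij}$ and $a_i^2b_j$ agree as sections of $\mc{O}_X(1)|_{U_{ij}}$ under the trivialization implicit in $\vphi_{ij}$. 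Hence $\tau_{ij}^{\otimes 8}$ equals the image $\bigwedge_\ell(a_i^2b_j)$ used to define $\delta_{ij}$, and applying $\delta_{ij}$ returns $(a_i^2b_j)\otimes(a_i^2b_j)=\tau_{ij}\otimes\tau_{ij}$.

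Thus $\varpi_{ij}(\widehat{\varpi}_{ij})=\tau_{ij}^{\otimes 2}$ is manifestly a tensor square of a nowhere-vanishing section of $\mc{O}_X(1)|_{U_{ij}}$, which is exactly the compatibility condition of \cite[Definition 21]{KW21}. The only real content is bookkeeping: the potential pitfall is the canonical isomorphism $\det\mc{O}_X(1)^{\oplus 8}\cong\mc{O}_X(8)$ and whether it sends $\bigwedge_\ell\tau_{ij}$ to $\tau_{ij}^{\otimes 8}$ without a troublesome sign or scaling, but this is the standard identification on direct sums of a single line bundle and so introduces no correction. The bulk of the work has already been front-loaded into Lemma~\ref{lem:rel orientable} (which produces $\omega_X\cong\mc{O}_X(-6)$ and hence $\det T_X\cong\mc{O}_X(6)$) and into the explicit description $q_\bm{b}(T_{ij}\cap H_\bm{a})=a_i^2b_j$, so the proof reduces to writing down this chain of identifications.
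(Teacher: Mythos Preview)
Your argument is essentially the paper's own proof: define the distinguished element $\eta_{ij}\in\Hom(\det T_X,\det\mc{O}_X(1)^{\oplus 8})|_{U_{ij}}$ by $\partial_{ij}\mapsto\bigwedge_{\ell=1}^8\tau_{ij}=\bigwedge_{\ell=1}^8(a_i^2b_j)$, and observe that $\varpi_{ij}(\eta_{ij})=(a_i^2b_j)\otimes(a_i^2b_j)$ is a tensor square. Your more explicit tracing through $\gamma_{ij}$ and $\delta_{ij}$, together with the identification $\tau_{ij}=a_i^2b_j$, is just an expanded version of the paper's one-line ``by construction''.

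There is one point you omit that the paper does include. The maps $\varpi_{ij}$ are defined chart by chart, and for them to constitute a relative orientation in the sense of \cite{KW21} one must know that they are restrictions of a single global isomorphism $\Hom(\det T_X,\det\mc{O}_X(1)^{\oplus 8})\cong\mc{O}_X(1)^{\otimes 2}$. Lemma~\ref{lem:rel orientable} only gives abstract existence of \emph{some} such isomorphism; it does not tell you that the particular $\varpi_{ij}$ agree on overlaps. The paper closes this by checking that on $U_{mn}\cap U_{ij}$ the transition $\eta_{mn}\mapsto\eta_{ij}$ is sent by $\varpi_{ij}$ to $(a_i^2b_j/a_m^2b_n)^{\otimes 2}$, i.e.\ the $\varpi_{ij}$ differ from one another by tensor squares and hence glue. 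You should add this overlap computation (it is immediate from the formulas you already have); without it, the phrase ``the relative orientation $\varpi_{ij}$'' is not yet justified.
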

\begin{proof}
By construction, we have $\varpi_{ij}([\det\psi_{ij}^\vee\mapsto\det dw_{ij}])=((a_i,\ell_j)\otimes\pi^*(a_i)^8)^{\otimes 2}$, which is a tensor square as desired.
\end{proof}

\section{Local contributions}\label{sec:geometric interpretation}
We now compute the local index $\ind_{(H,q)}(\sigma)$ of a conic $(H,q)$ meeting the lines $L_1,\ldots,L_8$. By \cite[Definition 30 and Corollary 31]{KW21}, the local index is equal to the local $\mb{A}^1$-degree $\deg^{\mb{A}^1}_{w_{ij}(H,q)}(\Phi_{ij})$ of the composite
\[\Phi_{ij}:=\psi_{ij}\circ\sigma\circ\vphi_{ij}\]
for any $i,j$ such that $(H,q)\in U_{ij}$. (Here, $\psi_{ij}$ is the local trivialization $E^{\oplus 8}|_{W_{ij}}\to W_{ij}\times\mb{A}^8_k$ post-composed with the projection $W_{ij}\times\mb{A}^8_k\to\mb{A}^8_k$.) By~\cite[Proposition 9.25]{eisenbud_harris_2016}, all zeros of $\Phi_{ij}$ are simple, so local degree can be computed by the Jacobian determinant of $\Phi_{ij}$ evaluated at $w_{ij}(H,q)$ \cite[Lemma 9]{KW19}\label{prop:ind}. Since $k$ is assumed to be perfect, the field of definition $k(q)$ of $(H,q)$ is separable over $k$, so~\cite[Proposition 34]{KW21} implies that we can compute the local index by base changing and post-composing with the field trace $\Tr_{k(q)/k}$ (see also~\cite[Theorem 1.3]{BBMMO21}). As a result, we have the following proposition.

\begin{prop}
Let $k(q)$ be the field of definition of $(H,q)\in W_{ij}$, let $\Tr_{k(q)/k}$ be the field trace, and let $\Jac(\Phi_{ij})$ be the determinant of the Jacobian matrix of $\Phi_{ij}$. Then
\[\ind_{(H,q)}(\sigma)=\Tr_{k(q)/k}\langle\Jac(\Phi_{ij})|_{w_{ij}(H,q)}\rangle.\]
\end{prop}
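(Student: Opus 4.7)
The proposition follows by assembling three ingredients already in place in the paper. First, the local index is expressible as a local $\mb{A}^1$-degree: Lemma~\ref{lem:rel orientable} supplies a relative orientation of $\mc{O}_X(1)^{\oplus 8}\to X$, and Proposition~\ref{prop:compatible} asserts that the affine coordinates $\vphi_{ij}$ and trivializations $\psi_{ij}$ are compatible with this orientation in the sense of \cite[Definition 21]{KW21}. Combined with the Nisnevich coordinate property of $\vphi_{ij}^{-1}$ verified earlier, this puts us in the setting of \cite[Definition 30 and Corollary 31]{KW21}, which identifies
\[\ind_{(H,q)}(\sigma) = \deg^{\mb{A}^1}_{\vphi_{ij}^{-1}(H,q)}(\Phi_{ij})\]
in $\GW(k)$.

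Second, I need the zeros of $\Phi_{ij}$ to be simple, because only in that case does the local $\mb{A}^1$-degree admit a clean Jacobian-determinant formula. This is precisely the content of \cite[Proposition 9.25]{eisenbud_harris_2016}: for a general choice of $L_1,\ldots,L_8$ the section $\sigma$ is transverse to the zero section, so $\vphi_{ij}^{-1}(H,q)$ is a simple zero of $\Phi_{ij}$ and in particular $\Jac(\Phi_{ij})|_{\vphi_{ij}^{-1}(H,q)}$ is a unit in $k(q)$.

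Third, I would apply \cite[Lemma 9]{KW19} (which rests on \cite[(4.7) Korollar]{SS75}): at an isolated simple zero $p$ of an endomorphism $f\colon \mb{A}^n_k \to \mb{A}^n_k$ whose residue field $k(p)$ is separable over $k$, the local $\mb{A}^1$-degree is
\[\deg^{\mb{A}^1}_p(f) = \Tr_{k(p)/k}\langle \Jac(f)|_p\rangle.\]
Separability is automatic since $k$ is perfect, and the Nisnevich property of $\vphi_{ij}^{-1}$ supplies the residue-field identification $k(\vphi_{ij}^{-1}(H,q)) = k(H,q) = k(q)$. Substituting produces the claimed formula.

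The argument is essentially bookkeeping: the nontrivial work was absorbed into Proposition~\ref{prop:compatible} (compatibility of trivializations with the relative orientation) and into checking the Nisnevich hypothesis, both already done. The typical conceptual obstacle in such a proof is verifying the orientation compatibility exactly as \cite[Definition 21]{KW21} demands, but that has been handled, so here the proposition reduces to a short citation chain.
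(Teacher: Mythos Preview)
Your proposal is correct and follows essentially the same approach as the paper: the paper's argument (given in the paragraph immediately preceding the proposition) likewise invokes \cite[Definition 30 and Corollary 31]{KW21} to identify the local index with the local $\mb{A}^1$-degree of $\Phi_{ij}$, appeals to \cite[Proposition 9.25]{eisenbud_harris_2016} for simplicity of the zeros, and then applies \cite[Lemma 9]{KW19} for the Jacobian formula. Your write-up is slightly more explicit about the hypotheses feeding into each citation (relative orientability, compatibility, Nisnevich coordinates, separability via perfection of $k$), but the logical structure is identical.
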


Our goal in this section is to interpret $\Jac(\Phi_{ij})|_{w_{ij}(H,q)}$ in terms of geometric information intrinsic to the conic $(H,q)$ and the lines $L_1,\ldots,L_8$. To simplify notation, let $\Phi_{ij}^n=\psi_{ij}'\circ\sigma_{L_n}\circ\vphi_{ij}$, so that $\Phi_{ij}=(\Phi_{ij}^0,\ldots,\Phi_{ij}^8)$. On points (which is the generality needed for our computations), the evaluation section can be computed as $\mr{ev}_L\circ z(H,q)=q(L\cap H)$. As a function of $(a_1,a_2,a_3,b_1,\ldots,b_5)\in\mb{A}^8_k$, we can thus write out $\Phi^n_{ij}$:
\begin{align*}
    \Phi^n_{ij}(a_1,\ldots,b_5)&=\frac{\sigma_n(H_{\alpha_i(a_1,a_2,a_3)},q_{\beta_j(b_1,\ldots,b_5)})}{\tau_{ij}(H_{\alpha_i(a_1,a_2,a_3)},q_{\beta_j(b_1,\ldots,b_5)})}\\
    &=\frac{q_{\beta_j(b_1,\ldots,b_5)}(L_n\cap H_{\alpha_i(a_1,a_2,a_3)})}{q_{\beta_j(b_1,\ldots,b_5)}(T_{ij}\cap H_{\alpha_i(a_1,a_2,a_3)})}\\
    &=\begin{cases}
   z_1^2+b_1z_2^2+b_2z_3^2+b_3z_2z_3+b_4z_1z_3+b_5z_1z_2 & j=0,\\
   b_1z_1^2+z_2^2+b_2z_3^2+b_3z_2z_3+b_4z_1z_3+b_5z_1z_2 & j=1,\\
   b_1z_1^2+b_2z_2^2+z_3^2+b_3z_2z_3+b_4z_1z_3+b_5z_1z_2 & j=2,\\
   b_1z_1^2+b_2z_2^2+b_3z_3^2+(1-b_2-b_3)z_2z_3+b_4z_1z_3+b_5z_1z_2 & j=3,\\
   b_1z_1^2+b_2z_2^2+b_3z_3^2+b_4z_2z_3+(1-b_1-b_3)z_1z_3+b_5z_1z_2 & j=4,\\
   b_1z_1^2+b_2z_2^2+b_3z_3^2+b_4z_2z_3+b_5z_1z_3+(1-b_1-b_2)z_1z_2 & j=5,\\
    \end{cases}\stepcounter{equation}\tag{\theequation}\label{eq:Phi}
\end{align*}
where $(z_1,z_2,z_3)$ is a preferred affine representative of the point $L_n\cap H_{\alpha_i(a_1,a_2,a_3)}$. While the coordinates of this point are a priori only defined up to a scalar, part of the trivialization data is a dehomogenization of these coordinates. However, we will not need to work this out explicitly since the chain rule implies that the partial derivatives $\frac{\partial\Phi^n_{ij}}{\partial a_\ell}$ will be independent of scaling $(z_1,z_2,z_3)$.

\begin{rem}
Recall that each $q_\bm{b}$ is not simply a quadratic polynomial, but a projective class of such polynomials. The scalar action on these classes is cancelled out in the ratio in Equation~\ref{eq:Phi}. Similarly, the points $L_n\cap H_\bm{a}$ and $T_{ij}\cap H_\bm{a}$ live in the projective plane, so their coordinates are only defined up to scalars. Again, the trivialization data cancels out the scalar action on the coordinates of these points, so we get a well-defined function.
\end{rem}

\subsection{Geometric interpretation}
In order to provide a geometric interpretation of $\Jac(\Phi_{ij})|_{w_{ij}(H,q)}$, we will give a geometric interpretation of the partial derivatives $\frac{\partial\Phi^n_{ij}}{\partial a_\ell}$ and $\frac{\partial\Phi^n_{ij}}{\partial b_\ell}$. We will phrase this interpretation in terms of the affine geometry underlying our projective conics and projective lines; in particular, $H_{\alpha_i(a_1,a_2,a_3)}$ will be an affine 3-plane instead of a projective 2-plane, $L_n$ will be an affine 2-plane instead of a projective line, and $\mb{V}(q_{\beta_j(b_1,\ldots,b_5)})$ will be an affine cone instead of a projective conic. We start with $\frac{\partial\Phi^n_{ij}}{\partial b_\ell}$. Since $\Phi^n_{ij}$ are linear in $b_\ell$, these partial derivatives are straightforward to compute (see Figure~\ref{fig:dPhi/db}). Geometrically, the entries $\frac{\partial\Phi^n_{ij}}{\partial b_\ell}$ of the Jacobian matrix of $\Phi_{ij}$ at $w_{ij}(H,q)$ are recording the coordinates of the intersections $L_n\cap H_{\alpha_i(a_1,a_2,a_3)}$ in terms of the coordinates $(y_{i0},y_{i1},y_{i2})$ on the plane $H_{\alpha_i(a_1,a_2,a_3)}$. These coordinates are slightly modified when $3\leq j\leq 5$ and $1\leq\ell\leq 3$, due to our modified coordinates $w_{ij}$ when $3\leq j\leq 5$.

\begin{figure}[h]
 \begin{tabular}{|c | c c c c c c|} 
 \hline
 \tiny{\diagbox{$\ell$}{$j$}} & 0 & 1 & 2 & 3 & 4& 5\\ 
 \hline
 1 & $z_2^2$ & $z_1^2$ & $z_1^2$ & $z_1^2$ & $z_1^2-z_1z_3$ & $z_1^2-z_1z_2$ \\
 2 & $z_3^2$ & $z_3^2$ & $z_2^2$ & $z_2^2-z_2z_3$ & $z_2^2$ & $z_2^2-z_1z_2$ \\
 3 & $z_2z_3$ & $z_2z_3$ & $z_2z_3$ & $z_3^2-z_2z_3$ & $z_3^2-z_1z_3$ & $z_3^2$\\
 4 & $z_1z_3$ & $z_1z_3$ & $z_1z_3$ & $z_1z_3$ & $z_2z_3$ & $z_2z_3$ \\
 5 & $z_1z_2$ & $z_1z_2$ & $z_1z_2$ & $z_1z_2$ & $z_1z_2$ & $z_1z_3$ \\
 \hline
\end{tabular}
\caption{$\frac{\partial\Phi^n_{ij}}{\partial b_\ell}$}
\label{fig:dPhi/db}
\end{figure}

Now we consider $\frac{\partial\Phi^n_{ij}}{\partial a_\ell}$. Noting that $z_1,z_2,z_3$ are functions of $(a_1,a_2,a_3)$, we compute $\frac{\partial\Phi^n_{ij}}{\partial a_\ell}$ via the chain rule:
\begin{align}\label{eq:dPhi/da}
    \frac{\partial\Phi^n_{ij}}{\partial a_\ell}&=\frac{\partial\Phi^n_{ij}}{\partial z_1}\cdot\frac{\partial z_1}{\partial a_\ell}+\frac{\partial\Phi^n_{ij}}{\partial z_2}\cdot\frac{\partial z_2}{\partial a_\ell}+\frac{\partial\Phi^n_{ij}}{\partial z_3}\cdot\frac{\partial z_3}{\partial a_\ell}.
\end{align}
Equation~\ref{eq:dPhi/da} is closely related to the tangent plane in $H_{\alpha_i(a_1,a_2,a_3)}\cong\mb{A}^3_{k(q)}$ of $\mb{V}(q_{\beta_j(b_1,\ldots,b_5)})$ at $(z_1,z_2,z_3)$. Indeed, replacing $z_1,z_2,z_3$ with the projective coordinates $y_{i0},y_{i1},y_{i2}\in H_{\alpha_i(a_1,a_2,a_3)}^\vee$ in Equation~\ref{eq:Phi}, we let
\[q_{ij}:=q_{\beta_j(b_1,\ldots,b_5)}=\begin{cases}
    z_{i0}^2+b_1z_{i1}^2+b_2z_{i2}^2+b_3z_{i1}z_{i2}+b_4z_{i0}z_{i2}+b_5z_{i0}z_{i1} & j=0,\\
    b_1z_{i0}^2+z_{i1}^2+b_2z_{i2}^2+b_3z_{i1}z_{i2}+b_4z_{i0}z_{i2}+b_5z_{i0}z_{i1} & j=1,\\
    & \vdots\\
    b_1z_{i0}^2+b_2z_{i1}^2+b_3z_{i2}^2+b_4z_{i1}z_{i2}+b_5z_{i0}z_{i2}+(1-b_1-b_2)z_{i0}z_{i1} & j=5\\
    \end{cases}\]
be the defining equation for $\mb{V}(q_{\beta_j(b_1,\ldots,b_5)})\subset H_{\alpha_i(a_1,a_2,a_3)}\cong\mb{A}^3_{k(q)}$ (see Figure~\ref{fig:cone1}). Any conic meeting 8 general lines in $\mb{P}^3$ is smooth by \cite[Lemma 9.21]{eisenbud_harris_2016}, so the affine cone $\mb{V}(q_{ij})$ (corresponding to the projective conic $\mb{V}(q_{ij})$) is smooth away from the cone point at the origin $(0,0,0)\in\mb{A}^3_{k(q)}$. It follows that the tangent plane $T_p\mb{V}(q_{ij})$ at $p:=(z_1,z_2,z_3)$ is defined by the equation
\[\frac{\partial q_{ij}}{\partial y_{i0}}\bigg|_p\cdot y_{i0}+\frac{\partial q_{ij}}{\partial y_{i1}}\bigg|_p\cdot y_{i1}+\frac{\partial q_{ij}}{\partial y_{i2}}\bigg|_p\cdot y_{i2}=0.\]
Since $\frac{\partial\Phi^n_{ij}}{\partial z_\ell}=\frac{\partial q_{ij}}{\partial y_{i,\ell-1}}|_p$ and $p$ lies on the tangent plane $T_p\mb{V}(q_{ij})$, we have
\begin{align*}
\frac{\partial\Phi^n_{ij}}{\partial a_\ell}&=\frac{\partial q_{ij}}{\partial y_{i0}}\bigg|_p\left(\frac{\partial z_1}{\partial a_\ell}\right)+\frac{\partial q_{ij}}{\partial y_{i1}}\bigg|_p\left(\frac{\partial z_2}{\partial a_\ell}\right)+\frac{\partial q_{ij}}{\partial y_{i2}}\bigg|_p\left(\frac{\partial z_3}{\partial a_\ell}\right)\\
&=\frac{\partial q_{ij}}{\partial y_{i0}}\bigg|_p\left(z_1+\frac{\partial z_1}{\partial a_\ell}\right)+\frac{\partial q_{ij}}{\partial y_{i1}}\bigg|_p\left(z_2+\frac{\partial z_2}{\partial a_\ell}\right)+\frac{\partial q_{ij}}{\partial y_{i2}}\bigg|_p\left(z_3+\frac{\partial z_3}{\partial a_\ell}\right).\stepcounter{equation}\tag{\theequation}\label{eq:level set}
\end{align*}
Geometrically, this records information about the slope of the affine plane $L_n$ (corresponding to the projective line $L_n$) relative to the plane $T_p\mb{V}(q_{ij})$. Indeed, $z_m+\frac{\partial z_m}{\partial a_\ell}$ is $z_m$ shifted by the rate of change of $z_m$ (representing a coordinate of a basis vector in $L_n\cap H_{\alpha_i(a_1,a_2,a_3)}$) as $a_{\ell}$ changes. Equation~\ref{eq:level set} states that $(z_1+\frac{\partial z_1}{\partial a_\ell},z_2+\frac{\partial z_2}{\partial a_\ell},z_3+\frac{\partial z_3}{\partial a_\ell})$ lies on the level set defined by
\[\frac{\partial q_{ij}}{\partial y_{i0}}\bigg|_p\cdot y_{i0}+\frac{\partial q_{ij}}{\partial y_{i1}}\bigg|_p\cdot y_{i1}+\frac{\partial q_{ij}}{\partial y_{i2}}\bigg|_p\cdot y_{i2}=\frac{\partial\Phi^n_{ij}}{\partial a_\ell},\]
which is parallel to the tangent plane $T_p\mb{V}(q_{ij})$. It follows that $\frac{\partial\Phi^n_{ij}}{\partial a_\ell}$ measures the deviation of $p+\frac{\partial p}{\partial a_\ell}$ from the tangent plane $T_p\mb{V}(q_{ij})$, as illustrated in Figure~\ref{fig:cone2}.

\begin{figure}
    \centering
    \begin{tikzpicture}[tdplot_main_coords,scale=3/2]
    \coordinate (O) at (0,0,0);
    \coneback[surface,draw=cyan,dotted]{2}{4/3}{10}
    \conefront[surface,draw=cyan]{2}{4/3}{10}
    \draw[draw=red,thick] (2,0,3) -- (0,0,0);
    \coneback[surface]{3}{2}{10}
    \conefront[surface]{3}{2}{10}
    \fill (4/3,0,2) circle[radius=1pt];
    \node at (1/2,1/2,2-1/3) {$p$};
    \node[text=red] at (0,2,1/2) {$L_n\cap H_{\alpha_i(a_1,a_2,a_3)}$};
    \node at (0,-2,3/2) {$\mb{V}(q_{ij})$};
    \end{tikzpicture}
    \caption{$\mb{V}(q_{ij})$ as a cone in $\mb{A}^3_{k(q)}$}
    \label{fig:cone1}
\end{figure}

\begin{figure}
    \centering
    \begin{tikzpicture}[tdplot_main_coords,scale=3/2,tangent plane/.style args={at #1 with vectors #2 and #3}{%
insert path={($#1+($#2-(0,0,0)$)+($#3-(0,0,0)$)$) --  ($#1+($#2-(0,0,0)$)-($#3-(0,0,0)$)$) --  ($#1-($#2-(0,0,0)$)-($#3-(0,0,0)$)$) 
-- ($#1-($#2-(0,0,0)$)+($#3-(0,0,0)$)$) -- cycle}}]
    \coordinate (O) at (0,0,0);
    \coneback[surface]{3}{2}{10}
    \conefront[surface]{3}{2}{10}
    \draw[draw=red,semithick,fill=red,fill opacity=0.2,tangent plane=at {(4/3,0,2)} with vectors {(2/5,0,3/5)} and {(0,4/5,0)}];
    \fill (4/3,0,2) circle[radius=1pt];
    \node at (1/2,1/2,2-1/3) {$p$};
    \draw[draw=cyan,semithick,fill=cyan,fill opacity=0.2,tangent plane=at {(2,2/3,3/2)} with vectors {(2/5,0,3/5)} and {(0,4/5,0)}];
    \fill (2,2/3,3/2) circle[radius=1pt];
    \node at (2-1/3,1+1/3,3/2-1/4) {$p+\frac{\partial p}{\partial a_\ell}$};
    \node[text=red] at (1/2,-1,2+1/4) {$T_p\mb{V}(q_{ij})$};
    \node[text=cyan] at (1+1/4,1,1/3) {$T_p\mb{V}(q_{ij})+\frac{\partial p}{\partial a_\ell}$};
    \end{tikzpicture}
    \caption{$T_p\mb{V}(q_{ij})$ and its $\frac{\partial\Phi^n_{ij}}{\partial a_\ell}$-level set}
    \label{fig:cone2}
\end{figure}

\begin{rem}
Like many problems in enumerative geometry, the problem of counting conics through eight lines in $\mb{P}^3$ is \textit{of lci type}. That is, sections of the vector bundle encoding this enumerative problem are local complete intersection morphisms. The local indices of such problems can always be interpreted as an \textit{intersection volume} as introduced in~\cite{McK21} (see also~\cite[Section 5.1]{McK22}). 

However, this intersection volume describes the geometry of certain hypersurfaces in the parameter space of objects being counted --- in the present case, these hypersurfaces sit within $\mb{P}\Sym^2(\mc{S}^\vee)$. Generally, this geometry is a step removed from the objects that we actually want to consider (i.e.~the conics themselves). In this article, we have translated this intersection volume into terms pertaining to the conics being parameterized. Finding other, more parsimonious geometric descriptions for the local indices of this article would constitute interesting progress on the geometricity problem~\cite[Question 5.1 and Appendix C]{McK22}.
\end{rem}

\section{Real conics meeting 8 lines}\label{sec:real}
Let $k=\mb{R}$. If a non-real conic meets 8 general lines in $\mb{P}^3_\mb{R}$, then its complex conjugate meets these 8 lines as well. In particular, the number of real conics meeting the 8 lines must be an even integer between 0 and 92. By Hauenstein--Sottile \cite[Table 6]{HS12} and Griffin--Hauenstein \cite[Theorem 1]{GH15}, there exist 8 general lines for each $2n\in\{0,2,\ldots,92\}$ realizing the count of $2n$ real conics. It follows from Theorem~\ref{thm:main-enriched} that these $2n$ conics come in two families of order $n$.

\begin{defn}\label{def:pos/neg}
Let $L_1,\ldots,L_8$ be 8 general lines in $\mb{P}^3_\mb{R}$. A real conic $(H,q)$ meeting $L_1,\ldots,L_8$ is called \textit{positive} (respectively, \textit{negative}) if $\Jac(\Phi_{ij})|_{w_{ij}(H,q)}$ is positive (respectively, negative). Note that this definition implicitly depends on the order of $L_1,\ldots,L_8$; permuting these lines by an odd permutation turns a positive conic into a negative conic (and vice versa).
\end{defn}

\begin{rem}
By \cite[Proposition 9.25]{eisenbud_harris_2016}, all zeros of $\Phi_{ij}$ are simple and hence 
\[\Jac(\Phi_{ij})|_{w_{ij}(H,q)}\neq 0.\]
Moreover, since we have assumed that $(H,q)$ is real, it follows that $\Jac(\Phi_{ij})|_{w_{ij}(H,q)}$ is real. Finally, Proposition~\ref{prop:compatible} implies that $\Jac(\Phi_{ij})|_{w_{ij}(H,q)}$ depends on the choice of $W_{ij}\ni(H,q)$ only up to squares in $\mb{R}$. In particular, the sign of this value does not depend on the choice of $W_{ij}$, so Definition~\ref{def:pos/neg} is well-defined.
\end{rem}

\begin{thm}\label{thm:real}
Given 8 general lines $L_1,\ldots,L_8$ in $\mb{P}^3_\mb{R}$, there are an equal number of positive and negative real conics meeting $L_1,\ldots,L_8$.
\end{thm}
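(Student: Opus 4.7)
The plan is to take the signature of both sides of Equation~\eqref{eq:main} in Theorem~\ref{thm:main-enriched} and read off the claim. Recall that the signature defines a ring homomorphism $\sign:\GW(\mb{R})\to\mb{Z}$ with $\sign\langle a\rangle=+1$ for $a>0$, $\sign\langle a\rangle=-1$ for $a<0$, and in particular $\sign\mb{H}=0$. Applied to the left-hand side, we immediately get $\sign(46\cdot\mb{H})=0$.

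For the right-hand side, I would partition $Q$ into the real conics (those with $k(q)=\mb{R}$) and the non-real conics (those with $k(q)=\mb{C}$, which come in conjugate pairs). For a real conic $q$, the term $\Tr_{k(q)/k}\langle a_q\rangle$ is just $\langle a_q\rangle$, and by the preceding remark its signature depends only on the sign of $a_q=\Jac(\Phi_{ij})|_{\vphi_{ij}^{-1}(H,q)}$ modulo squares in $\mb{R}^\times$. By Definition~\ref{def:pos/neg}, this signature is $+1$ for a positive real conic and $-1$ for a negative real conic. For a non-real conic $q$ with $k(q)=\mb{C}$, the trace-form computation $\Tr_{\mb{C}/\mb{R}}\langle a_q\rangle=\langle 2\rangle+\langle -2a_q\bar{a}_q\rangle$ (or, equivalently, the general fact that the rank-$2$ trace form of any quadratic extension $L/K$ with $L=K(\sqrt{d})$ gives $\langle 2\rangle+\langle 2d\rangle$ on $\langle 1\rangle$, combined with multiplicativity) shows that $\Tr_{\mb{C}/\mb{R}}\langle a_q\rangle$ has signature $0$, since $\mb{C}=\mb{R}(\sqrt{-1})$.

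Combining these contributions, the signature of the right-hand side equals $N_+ - N_-$, where $N_+$ and $N_-$ denote the numbers of positive and negative real conics meeting $L_1,\ldots,L_8$, respectively. Equating with the left-hand side yields $N_+ - N_- = 0$, i.e., $N_+=N_-$, which is exactly the statement of Theorem~\ref{thm:real}.

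The main routine-but-needed step is the computation $\sign(\Tr_{\mb{C}/\mb{R}}\langle a\rangle)=0$ for every $a\in\mb{C}^\times$; this can be handled either by the explicit quadratic-extension trace formula above or by noting that $\GW(\mb{C})\to\GW(\mb{R})$ lands in the subgroup generated by $\mb{H}$ (since every nonzero complex number is a square up to sign, and even that sign disappears after taking the $\mb{R}$-trace). No genuine obstacle arises beyond carefully partitioning $Q$ by field of definition and invoking the well-definedness of the sign of $\Jac(\Phi_{ij})|_{\vphi_{ij}^{-1}(H,q)}$ established in the remark preceding the theorem.
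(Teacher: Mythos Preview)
Your proposal is correct and follows essentially the same approach as the paper: apply the signature to Equation~\eqref{eq:main}, note that the non-real conics contribute zero because $\sign(\Tr_{\mb{C}/\mb{R}}\langle a\rangle)=0$, and conclude $N_+-N_-=0$. The paper's proof is slightly terser, simply asserting that $\sign(\Tr_{\mb{C}/\mb{R}}\langle c\rangle)=0$ without the explicit trace-form formula; your cleanest justification is that every $a\in\mb{C}^\times$ is a square, so $\langle a\rangle=\langle 1\rangle$ in $\GW(\mb{C})$ and $\Tr_{\mb{C}/\mb{R}}\langle 1\rangle=\langle 2\rangle+\langle -2\rangle=\mb{H}$.
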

\begin{proof}
The result follows from Theorem~\ref{thm:main-enriched} by taking the signature of
\[46\cdot\mb{H}=\sum_{(H,q)\in\sigma^{-1}(0)}\Tr_{\mb{R}(q)/\mb{R}}\langle\Jac(\Phi_{ij})|_{w_{ij}(H,q)}\rangle.\]
The signature of $\Tr_{\mb{C}/\mb{R}}\langle c\rangle$ is 0 for any $c\in\mb{C}^\times$, and the signature of $46\cdot\mb{H}$ is likewise 0. We thus have
\begin{align*}
    0&=\sum_{\substack{\text{real positive}\\(H,q)}}1+\sum_{\substack{\text{real negative}\\(H,q)}}(-1)\\
    &=\#\{\text{positive real conics}\}-\#\{\text{negative real conics}\},
\end{align*}
as desired.
\end{proof}

\bibliography{conics-meeting-lines}{}
\bibliographystyle{alpha}
\end{document}